\numberwithin{equation}{section}
\theoremstyle{plain}
\newtheorem{theorem}{Theorem}[section]
\newtheorem{lemma}[theorem]{Lemma}
\newtheorem{corollary}[theorem]{Corollary}
\newtheorem{remark}{Remark}
\numberwithin{equation}{section}
\def\P{\mathbb{P} }
\def\R{\mathbb{R} }
\def\d{\mathrm{d}}
\def\D{\mathcal{D} }
\def\E{\mathbb{E} }
\def\e{\mathcal{E}}
\def\F{\mathcal{F}}
\def\Z{\widetilde{Z}}
\def\N{\widetilde{N}}
\def\V{\widetilde{V}}
\def\1{\mathbf{1}}
\def\d{\mathrm{d}}
\begin{document}
	\title[Branching Brownian motion with absorption]{The extremal process of branching Brownian motion with absorption}
	\thanks{The research of this project is supported by the National Key R\&D Program of China (No. 2020YFA0712900).}
	\author[F. Yang and Y. Zhu]{Fan Yang and Yaping Zhu}
	\address{Fan Yang\\ School of Mathematical Sciences \\ Beijing Normal University\\ Beijing 100875\\ P. R. China}
	\email{fan-yang@bnu.edu.cn}
	\thanks{The research of F. Yang is supported by China Postdoctoral Science Foundation (No. 2023TQ0033)}
	\address{Yaping Zhu\\ School of Mathematical Sciences \\ Peking University\\ Beijing 100871\\ P. R. China}
	\email{zhuyp@pku.edu.cn}
	\begin{abstract} 
		In this paper, we study branching Brownian motion with absorption, in which particles undergo Brownian motions with drift and are killed upon reaching the origin.
		We prove that the extremal process of this branching Brownian motion with absorption converges to a random shifted decorated Poisson point process. Furthermore, we show that the law of the right-most particle converges to the law of a random shifted Gumbel random variable.
	\end{abstract}
	\subjclass[2020]{Primary: 60J80; Secondary: 60G70}
	
	\keywords{Branching Brownian motion; extremal process; Poisson point process}
	\maketitle
	\section{Introduction}\label{Sec1}
	\subsection{Background}
	A classical branching Brownian motion (BBM) in $\R$ can be constructed as follows. Initially there is a single particle at the origin of the real line and this particle moves as a 1-dimensional standard Brownian motion $B = \{B(t), t\geq 0 \}$. 
	After an independent exponential time with parameter $\beta$, the initial particle dies and produces $L$ offspring. $L$ is a positive integer-valued random variable with $m: = \E L\in (1,\infty)$.
	Starting from their positions of creation, each of these particles evolves independently and according to the same law as their parent. We denote by $N_t$ the collection of particles alive at time $t$. For any $u\in N_t$ and $s\le t$, let $X_u(s)$ be the position at time $s$ of particle $u$ or its ancestor alive at that time. The maximum of the BBM at time $t$ is defined as $M_t:=\max\{X_u(t):u\in N_t\}$. 
	
	McKean \cite{McKean75} established the connection between BBM and the Fisher-Kolmogorov-Petrovskii-Piskounov (F-KPP)
	reaction-diffusion equation
	\begin{equation}\label{KPPeq1}
		\frac{\partial u}{\partial t} = \frac{1}{2} \frac{\partial^2 u}{\partial x^2} + \beta ({f}({u})-{u}),
	\end{equation}
	where $f(s)=\E(s^{L})$ and $u:\R_+ \times \R \rightarrow [0,1]$.
	More precisely, it is shown in \cite{McKean75} that, for any $[0, 1]$-valued function $g$ on $\R$, $u(t,x) = \E \left[\prod_{v\in N_t} g(x+X_v(t)) \right]$ is a solution of \eqref{KPPeq1} with initial condition $u(0,x)=g(x)$.
	The F-KPP equation has been studied intensively by both analytic techniques (see, for example, Kolmogorov et al. \cite{Kolmogorov37} and Fisher \cite{Fisher37}) and probabilistic methods
	(see, for instance, McKean \cite{McKean75}, Bramson \cite{Bramson78,Bramson83}, Harris \cite{Harris99} and Kyprianou \cite{Kyprianou04}). 
	
	Define
	\begin{equation}\label{def_lambda*}
		\lambda^* := \sqrt{2\beta(m-1)}
	\end{equation} 
	and
	\begin{equation}
		m_t:=\lambda^*t-\frac{3}{2\lambda^*}\log t.
	\end{equation}
    In the case of $m=2$ and $\beta=1$,
	Bramson \cite{Bramson78} established that
	\begin{equation}
			\lim_{t\to\infty}\P(M_t\le m_t+z)=\lim_{t\to \infty} u(t,m_t+z)=w(z),\quad z\in \R,
	\end{equation}
	where $w$ solves the ordinary differential equation
	\begin{equation*}
		\frac{1}{2}w''+\sqrt{2}w'+f(w)-w=0.
	\end{equation*}
	Define 
	\begin{equation}\label{def_derivative}
		Z_t = \sum_{u\in N_t} (\lambda^*t - X_u(t)) e^{\lambda^*(X_u(t)-\lambda^*t) },
	\end{equation}
	then $Z_t$ is known as the derivative martingale of the BBM, see Kyprianou \cite{Kyprianou04}.
	Lalley and Sellke \cite{Lalley87} provided the following representation of $w$ for dyadic BBM
	\begin{equation}\label{travelling}
		w(z):=\E\left[e^{-C_* e^{-\lambda^* z}Z_{\infty}}\right],
	\end{equation}
	where $C_*$ is a positive constant and $Z_{\infty}:=\lim_{t\to \infty}Z_t$ $\P$-almost surely.
	The behavior of the particles at the tip of BBMs was investigated by A\"{i}d\'{e}kon, Berestycki, Brunet and Shi \cite{ABBS13} as well as Arguin, Bovier and Kistler \cite{ABK13}. They considered the extremal process of BBM, which is defined by
	\begin{equation}
		\sum_{u\in N_t} \delta_{X_u(t)-m_t},
	\end{equation}
	and showed that it converges in law to a random shifted decorated Poisson point process (DPPP). 
	A DPPP $\e$ is determined by two components: an intensity measure $\mu$ which is a (random) measure on $\R$, and a decoration process. Conditioned on $\mu$, let $\sum_{i} \delta_{p_i}$ be a Poisson point process with intensity $\mu$, and let $\{\sum_j \delta_{d^i_j}\}$ be a family of independent point processes with law $\D$. Then $\e = \sum_{i,j} \delta_{p_i+d^i_j}$ is a DPPP with intensity $\mu$ and decoration $\D$, denoted by DPPP$(\mu, \D)$. A\"{i}d\'{e}kon et al. \cite{ABBS13} and Arguin et al. \cite{ABK13} obtained that 
	\begin{equation}
		\lim_{t\rightarrow\infty} \sum_{u\in N_t} \delta_{X_u(t)-m_t} = \mathrm{DPPP}\left(\lambda^*C_*Z_{\infty}e^{-\lambda^*x}\d x, \D^{\lambda^*}\right) \mbox{ in law },
	\end{equation}
	where $C_*$ is the positive constant given by \eqref{travelling} and 
	\begin{equation}\label{def_D}
		\D^{\lambda^*}(\cdot) := \lim_{t\rightarrow\infty} \P\left( \sum_{u\in N_t} \delta_{X_u(t)-M_t} \in \cdot \, \Big| M_t \geq \lambda^*t \right).
	\end{equation}
	In the recent paper \cite{Kim23}, Kim, Lubetzky and Zeitouni
	studied the maximum of BBM in $\R^d$. For irreducible multitype branching Brownian motion, we refer the readers to Hou et al. \cite{Hou23} and for reducible multitype branching Brownian motion, we refer the readers to Belloum et al. \cite{Belloum21} and Ma et al. \cite{Ma23}.

	In this paper, we consider the extremal process of BBM with absorption.	We will focus on a branching Brownian motion with 	drift $-\rho$, in which particles are absorbed at the origin. 
	The process can be defined as follows. Starting with a single particle at $x>0$, this particle moves according to a 1-dimensional Brownian motion with drift $-\rho$ $(\rho \in \R)$ until an independent exponentially distributed time with rate $\beta$. 
	When the initial particle dies, it produces a random number $L\ge 1$ particles at the place of its death. These offspring particles evolve independently from their birth place, according the same law as their parent. 
	Assume that $L$ has distribution $\{p_k,k\geq 1\}$ with $\E L = m\in(1,\infty)$ and $\E L^2 < \infty$.
	We add an absorbing barrier at the origin, i.e. particles hitting the barrier are instantly killed without producing offspring. The set $\widetilde{N}_t^{-\rho}$ denotes the particles of the BBM with absorption alive at time $t$. 
	For any $u\in \widetilde{N}_t^{-\rho}$ and $s\leq t$, we still use $X_u(s)$ to denote the position of the particle $u\in \widetilde{N}_t^{-\rho}$ or its ancestor at time $s$. Define $$\widetilde{Y}_t^{-\rho} := \sum_{u\in\widetilde{N}_t^{-\rho}} \delta_{X_u(t)},$$ which is a point process describing the number and positions of individuals alive at time $t$. 
	The extinction time of the BBM with absorption is defined as
	$$\zeta^{-\rho}:=\inf\{t>0: \widetilde{N}_t^{-\rho} = \emptyset \}.$$ 
	Let $\widetilde{M}_t^{-\rho}:=\max\{X_u(t):u\in \widetilde{N}_t^{-\rho}\}$ be the right-most position of the particles in $\widetilde{N}_t^{-\rho}$.
	The law of the BBM with absorption starting from single particle at $x$ is denoted by $\mathbf{P}_x$ and its expectation is denoted by $\mathbf{E}_x$.

	The asymptotic behavior of branching Brownian motion (BBM) with absorption has been studied extensively in the literature.
	Kesten \cite{Kesten78} proved that the process dies out almost surely when $\rho \ge \lambda^*$ while there is a positive probability of survival when $\rho < \lambda^*$. Therefore, $\rho = \lambda^* $ is the critical drift separating the supercritical case $\rho < \lambda^*$ and the subcritical $\rho > \lambda^*$. 
	In the critical case, Kesten \cite{Kesten78} obtained upper and lower bounds on the survival probability, which were improved by Berestycki et al. \cite{Berestycki14}. Maillard and Schweinsberg \cite{Maillard22} have further improved these results.
	Harris et al. \cite{Harris06} studied properties of the right-most particle
	and provided a probabilistic proof of the classical result on the one-sided F-KPP traveling wave solution of speed $-\rho$ in the supercritical case. They proved that
	\begin{equation}\label{growth rate}
		\lim_{t\to \infty} \frac{\widetilde{M}_t^{-\rho}}{t}=\lambda^*-\rho\quad on \ \{\zeta^{-\rho}=\infty\}, \ \mathbf{P}_x\mbox{-a.s.}
	\end{equation}
	and $g(x):=\mathbf{P}_x(\zeta^{-\rho}<\infty )$ is the unique solution to 
	\begin{equation}
		\left \{
		\begin{aligned}
			& \frac{1}{2}g''-\rho g'+ f(g) -g = 0,\quad x>0, \\
			& g(0+)=1,\quad g(\infty)=0,\\
		\end{aligned}\right.
	\end{equation}
	where $f(s) = \E(s^L)$.
	In the subcritical case, the large time asymptotic behavior for the survival probability was given by Harris and Harris \cite{Harris07}. For the BBM with absorption in the near-critical case, Berestycki et al. \cite{Berestycki11} and Liu \cite{Liu21} are good references.
	
	In this paper, we study the extremal process of one-dimensional branching Brownian motions with absorption and prove that the limit of this point process converges to a random shifted decorated Poisson point process.

	\subsection{Main results}
	Define 
	\begin{equation}
		\Z_t^{-\rho} := \sum_{u\in \widetilde{N}_t^{-\rho}} ((\lambda^*-\rho) t-X_u(t)) e^{\lambda^*\left(X_u(t)-(\lambda^*-\rho)t\right)},
	\end{equation}
	and
	\begin{equation}
		m_t^{-\rho}:= (\lambda^*-\rho)t - \frac{3}{2\lambda^*}\log t.
	\end{equation}
	Our first main result, which is Theorem \ref{thrm1} below, deals with the convergence of the process $\Z_t^{-\rho}$ as $t\to \infty$.
	
	\begin{theorem}\label{thrm1}
		For any $x>0$ and $\rho < \lambda^*$, the limit $\Z_{\infty}^{-\rho} := \lim_{t\rightarrow\infty} \Z_t^{-\rho} $ exists $\mathbf{P}_x$-almost surely. Moreover, the events $\{\Z_{\infty}^{-\rho}>0 \}$ and $\{\zeta^{-\rho} = \infty \}$ agree up to a $\mathbf{P}_x$-null set.
	\end{theorem}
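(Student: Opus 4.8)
The plan is to reduce the problem to the critical derivative martingale of a driftless branching Brownian motion killed below a straight line, then to run a truncation argument for the convergence and an F-KPP uniqueness argument for the identification. First I would pass to driftless coordinates by setting $\widehat{X}_u(t):=X_u(t)+\rho t$: then $\{\widehat{X}_u(t)\}$ evolves as an ordinary (driftless) BBM, absorption at the origin becomes absorption below the line $s\mapsto\rho s$, and
\begin{equation*}
	\Z_t^{-\rho}=\sum_{u\in\N_t^{-\rho}}\bigl(\lambda^* t-\widehat{X}_u(t)\bigr)\,e^{\lambda^*(\widehat{X}_u(t)-\lambda^* t)}.
\end{equation*}
Writing $\psi(\lambda)=\tfrac{\lambda^2}{2}+\beta(m-1)$, the map $(t,x)\mapsto e^{\lambda x-\psi(\lambda)t}$ is space-time harmonic for the branching dynamics, so by a many-to-one computation $\sum_u e^{\lambda^*(\widehat{X}_u(t)-\lambda^* t)}$ is the critical additive martingale and $\Z_t^{-\rho}$ is minus its $\lambda$-derivative at $\lambda^*$; these identities supply the first-moment estimates I will need.

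For the almost sure convergence I would use a truncation/tube argument. Fixing $y>0$ and restricting to particles that also stay below the upper line $(\lambda^*-\rho)s+y$, I set
\begin{equation*}
	\V_t^{(y)}:=\sum_{u\in\N_t^{-\rho}}\bigl((\lambda^*-\rho)t+y-X_u(t)\bigr)\,e^{\lambda^*(X_u(t)-(\lambda^*-\rho)t)}\,\1_{\{X_u(s)\le(\lambda^*-\rho)s+y,\ \forall s\le t\}}.
\end{equation*}
On the truncation event the prefactor is nonnegative, so $\V_t^{(y)}\ge 0$. The key claim is that $\V_t^{(y)}$ is a nonnegative supermartingale: in the interior of the tube the summand is space-time harmonic, a particle reaching the upper line contributes $0$ at the crossing time, and a particle absorbed at the origin is removed while carrying the strictly positive mass $((\lambda^*-\rho)s+y)e^{-\lambda^*(\lambda^*-\rho)s}$, so killing only lowers the expectation. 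Nonnegative supermartingale convergence then yields $\V_\infty^{(y)}:=\lim_t\V_t^{(y)}$ a.s. Decomposing $\V_t^{(y)}=\widetilde{Z}_t^{(y)}+y\,\widetilde{W}_t^{(y)}$, where $\widetilde{Z}_t^{(y)}$ and $\widetilde{W}_t^{(y)}$ are the truncated versions of $\Z_t^{-\rho}$ and of the additive martingale, I would show that the truncated critical additive martingale $\widetilde{W}_t^{(y)}\to 0$ a.s., so that $\widetilde{Z}_t^{(y)}\to\V_\infty^{(y)}$ a.s. Finally I remove the truncation: a first-moment estimate bounds the expected contribution of particles that ever exceed the line $(\lambda^*-\rho)s+y$ uniformly in $t$, and this bound tends to $0$ as $y\to\infty$, upgrading the convergence of the truncated martingales to the almost sure existence of $\Z_\infty^{-\rho}$, which is nonnegative because each $\V_\infty^{(y)}\ge 0$.

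For the identification, one inclusion is immediate: on $\{\zeta^{-\rho}<\infty\}$ the sum defining $\Z_t^{-\rho}$ is eventually empty, so $\Z_\infty^{-\rho}=0$ and $\{\Z_\infty^{-\rho}>0\}\subseteq\{\zeta^{-\rho}=\infty\}$. For the reverse inclusion I would set $q(x):=\mathbf{P}_x(\Z_\infty^{-\rho}=0)$ and use the branching property: since, given $\mathcal F_t$, $\Z_\infty^{-\rho}$ is a sum of conditionally independent copies started from the surviving particles, one obtains $\mathbf{P}_x(\Z_\infty^{-\rho}=0\mid\mathcal F_t)=\prod_{u\in\N_t^{-\rho}}q(X_u(t))$, whence $q$ solves the same F-KPP-type equation $\tfrac12 q''-\rho q'+f(q)-q=0$ on $(0,\infty)$ as the extinction probability $g$. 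Verifying the boundary conditions $q(0+)=1$ (a particle started near the origin is absorbed quickly) and $q(\infty)=0$ (a particle started far away behaves like unabsorbed BBM, whose critical derivative martingale is a.s.\ strictly positive), the uniqueness statement of Harris et al.\ \cite{Harris06} forces $q=g$. Thus $\mathbf{P}_x(\Z_\infty^{-\rho}=0)=\mathbf{P}_x(\zeta^{-\rho}<\infty)$, which with the easy inclusion shows that $\{\Z_\infty^{-\rho}>0\}$ and $\{\zeta^{-\rho}=\infty\}$ coincide up to a $\mathbf{P}_x$-null set.

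The main obstacle I anticipate is the truncation step: proving the supermartingale property of $\V_t^{(y)}$ rigorously while handling the two barriers and the killing simultaneously, and, above all, obtaining the uniform-in-$t$ first-moment control of particles crossing the upper line, which requires sharp ballot-type estimates for Brownian motion confined between the lines $\rho s$ and $(\lambda^*-\rho)s+y$. The identification step carries its own subtlety in establishing $q(\infty)=0$, i.e.\ that survival genuinely produces a strictly positive limit, which I would reduce to the corresponding known statement for BBM without absorption.
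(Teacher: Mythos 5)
Your proposal follows essentially the same route as the paper: pass to the driftless model with a moving absorbing line, introduce the nonnegative supermartingale obtained by additionally truncating above an upper line $z+\lambda^* s$, show the truncated additive martingale vanishes so the truncated derivative martingale converges, remove the truncation using that with probability tending to one no particle ever crosses the upper line, and identify $\{\Z_\infty^{-\rho}=0\}$ with extinction via the uniqueness of the one-sided F-KPP travelling wave. The one step you flag as an obstacle --- non-degeneracy of the truncated limit, which is what forces $q(\infty)=0$ --- is carried out in the paper exactly along the reduction you anticipate, by comparing with Kyprianou's martingale $V_t^z$ and using a Bessel-3 change of measure to show the absorbed particles cannot carry all of the mass.
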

	
	Define the extremal process of the BBM with absorption by
	\begin{equation}\label{def_e_t^rho}
		\e_t^{-\rho} := \sum_{u\in \widetilde{N}_t^{-\rho}} \delta_{X_u(t)-m_t^{-\rho}}.
	\end{equation}
	The following theorem gives the convergence of the extremal process.
	We show that the limit of $\e_t^{-\rho}$ is a Poisson random measure with exponential intensity in which each atom is decorated by an independent copy of an auxiliary measure $\D^{\lambda^*}$.
	\begin{theorem}\label{thrm2}
		For any $x>0$ and $\rho < \lambda^*$, we have under $\mathbf{P}_x$
		\begin{equation}
			\lim_{t\rightarrow\infty} \e_t^{-\rho}= \mathrm{DPPP}\left(\lambda^*C_*\Z_{\infty}^{-\rho}e^{-\lambda^*y}\d y, \D^{\lambda^*}\right) \mbox{ in law }
		\end{equation}
		where $C_*$ is as in \eqref{travelling} and $\D^{\lambda^*}$ is defined by \eqref{def_D}.
	\end{theorem}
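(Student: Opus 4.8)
The plan is to prove convergence of the Laplace functional of $\e_t^{-\rho}$ and to match it with that of the claimed $\mathrm{DPPP}$. Fix a nonnegative continuous test function $\phi$ supported on a half-line $[-A,\infty)$ and decaying suitably, and let $\Delta=\sum_j\delta_{d_j}$ denote a point process with law $\D^{\lambda^*}$. Using the standard formula for the Laplace functional of a decorated Poisson process with exponential intensity, the target identity is
\begin{equation*}
\lim_{t\to\infty}\mathbf{E}_x\big[e^{-\langle \e_t^{-\rho},\phi\rangle}\big]
=\mathbf{E}_x\Big[\exp\big(-C_*\,\Z_\infty^{-\rho}\,\mathcal{C}(\phi)\big)\Big],
\qquad
\mathcal{C}(\phi):=\lambda^*\int_\R e^{-\lambda^* y}\Big(1-\E\big[e^{-\sum_j\phi(d_j+y)}\big]\Big)\,\d y .
\end{equation*}
Since the right-hand side is exactly the conditional Laplace functional of $\mathrm{DPPP}(\lambda^*C_*\Z_\infty^{-\rho}e^{-\lambda^* y}\d y,\D^{\lambda^*})$ after integrating over $\Z_\infty^{-\rho}$, verifying this identity over a rich enough class of $\phi$ both fixes the exponential intensity and the decoration $\D^{\lambda^*}$ and identifies the limit as the stated $\mathrm{DPPP}$.

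The first reduction is to remove the drift. Writing $\widehat X_u(t):=X_u(t)+\rho t$, the positions $\{\widehat X_u(t)\}$ form a driftless branching Brownian motion started at $x$, the absorption at the origin becomes killing below the linear barrier $\{(s,\rho s):s\ge0\}$, and since $m_t^{-\rho}+\rho t=\lambda^*t-\tfrac{3}{2\lambda^*}\log t=:m_t$ we have $X_u(t)-m_t^{-\rho}=\widehat X_u(t)-m_t$. Thus $\e_t^{-\rho}$ is the extremal process, centered at $m_t$, of an ordinary BBM with particles killed upon crossing the line of slope $\rho<\lambda^*$, which sits at distance $(\lambda^*-\rho)t\to\infty$ below the front. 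Next I would use the branching property at a large fixed time $r$: conditioning on $\mathcal{F}_r$,
\begin{equation*}
\mathbf{E}_x\big[e^{-\langle \e_t^{-\rho},\phi\rangle}\,\big|\,\mathcal{F}_r\big]
=\prod_{u\in\N_r^{-\rho}} G_{t}^{r}\big(X_u(r)\big),
\end{equation*}
where $G_t^{r}(y)$ is the Laplace functional of the absorbed subtree born at time $r$ at height $y$ (with the barrier continuing), recentered at $m_t^{-\rho}$; the associated offset is $m_t^{-\rho}-m_{t-r}^{-\rho}\to(\lambda^*-\rho)r$ as $t\to\infty$.

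The crux is the single-ancestor asymptotic. Writing $\xi:=(\lambda^*-\rho)r-y$ for the distance of the ancestor below the front level at time $r$, I would establish
\begin{equation*}
1-G_t^{r}\big((\lambda^*-\rho)r-\xi\big)\ \sim\ C_*\,\mathcal{C}(\phi)\,\xi\, e^{-\lambda^*\xi}
\end{equation*}
in the regime $t\to\infty$ followed by $r\to\infty$. This is obtained by transferring the Bramson tail estimates and the A\"id\'ekon--Berestycki--Brunet--Shi / Arguin--Bovier--Kistler tip analysis to the killed process: a subtree started $\xi$ below the front has its own maximum near $-\xi$ after the global recentering, so its contribution to the tip comes from the rare overshoot event, which produces the Lalley--Sellke constant $C_*$ and the decoration $\D^{\lambda^*}$, while the barrier — lying a further distance $(\lambda^*-\rho)(t-r)\to\infty$ below — does not perturb the local structure near the front. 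Summing over $u$, using $\log\prod_u(1-(1-G_t^r))\approx-\sum_u(1-G_t^r)$ and recognizing $\sum_{u}\xi_u e^{-\lambda^*\xi_u}=\Z_r^{-\rho}$ with $\xi_u=(\lambda^*-\rho)r-X_u(r)$, one gets
\begin{equation*}
\sum_{u\in\N_r^{-\rho}}\big(1-G_t^{r}(X_u(r))\big)\ \longrightarrow\ C_*\,\mathcal{C}(\phi)\,\Z_r^{-\rho},
\end{equation*}
so the conditional functional converges to $\exp(-C_*\mathcal{C}(\phi)\Z_r^{-\rho})$; letting $r\to\infty$ and invoking Theorem \ref{thrm1} ($\Z_r^{-\rho}\to\Z_\infty^{-\rho}$) together with bounded convergence yields the target identity.

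The main obstacle is twofold and concentrated in the last two steps. First, proving the single-ancestor tail asymptotic with the \emph{exact} constant $C_*$ and decoration $\D^{\lambda^*}$ requires a careful adaptation of Bramson's convergence estimates and the decoration analysis to the presence of the killing barrier; one must show quantitatively that particles contributing to the tip have ancestral trajectories staying well above the barrier, so that its removal affects only the genealogical pruning (hence the replacement of $Z_\infty$ by $\Z_\infty^{-\rho}$) and not the local extremal geometry. Second, justifying the double limit demands uniform control: one must rule out the contribution of subtrees whose ancestors approach the absorbing barrier and obtain the uniform integrability needed to legitimize both the linearization of $\log\prod_u(1-(1-G_t^r))$ and the interchange of the $t\to\infty$ and $r\to\infty$ limits. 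I expect this to be handled by introducing a truncated derivative martingale that counts only particles having remained above an intermediate barrier, showing it shares the limit $\Z_\infty^{-\rho}$, and using it to dominate the error terms.
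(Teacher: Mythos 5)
Your target identity (convergence of the Laplace functional to $\mathbf{E}_x[\exp(-C_*\mathcal{C}(\phi)\Z_\infty^{-\rho})]$) and your initial reduction to a driftless BBM killed below the line of slope $\rho$ are exactly the paper's. The divergence, and the gap, lies in the decomposition at the intermediate time. You condition on $\F_r$ and keep the absorbing barrier alive inside each subtree, so your $G_t^r(y)$ is the Laplace functional of the extremal process of a \emph{killed} BBM started from $y$. The single-ancestor asymptotic $1-G_t^r\sim C_*\,\mathcal{C}(\phi)\,\xi e^{-\lambda^*\xi}$, with the unchanged constant $C_*$ and decoration $\D^{\lambda^*}$, is then essentially Theorem \ref{thrm2} itself applied from a starting point far above the barrier; the argument is circular unless you genuinely redo the Bramson/A\"id\'ekon--Berestycki--Brunet--Shi/Arguin--Bovier--Kistler analysis in the presence of the barrier, which you correctly identify as ``the main obstacle'' but do not carry out. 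The paper sidesteps this entirely by truncating the \emph{barrier} rather than conditioning and keeping it: it introduces $\e^s_t=\sum_{v\in\N_s}\sum_{u>v,\,u\in N_t}\delta_{X_u(t)-m_t}$, whose subtrees after time $s$ are ordinary (unkilled) BBMs, so the known Laplace-functional convergence for classical BBM (Lemma \ref{lemma:e_t_Laplace}) applies verbatim and yields $\lim_t\E_x e^{-\langle\e^s_t,\varphi\rangle}=\E_x e^{-C(\varphi)\Z^s_\infty}$ with no one-particle tail asymptotics at all; the intensity is identified through the exact distributional identity $\Z^s_\infty\overset{d}{=}\sum_{v\in\N_s}e^{\sqrt2(X_v(s)-\sqrt2 s)}Z_\infty(v)$ and the convergence $\Z^s_\infty\to\Z_\infty$ (Lemma \ref{lemma:Z^s_converge}), not by summing tails.

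Two further concrete problems. First, the linearization $\log\prod_u(1-(1-G_t^r))\approx-\sum_u(1-G_t^r)$ is not valid for fixed $r$ as $t\to\infty$, since each factor $1-G_t^r(X_u(r))$ converges to a nondegenerate limit in $(0,1)$; it only becomes an approximation after $r\to\infty$, and even then extracting the claimed $\xi e^{-\lambda^*\xi}$ asymptotic requires the precise tail of $Z_\infty$ (recall $\E Z_\infty=\infty$, so a first-moment expansion of $1-\E[e^{-cZ_\infty e^{-\lambda^*\xi}}]$ does not produce the linear factor $\xi$ for free). The paper's multiplicative representation avoids linearizing altogether. Second, the error control you defer to the end (``rule out subtrees whose ancestors approach the barrier'') is where the paper's only genuinely new estimate lives: its Lemma giving \eqref{eq_tau_geq_s} shows, via the stopping-line many-to-one formula over the first hitting time of the barrier together with the upper tail of $M_t$ from \cite{ABK12}, that $\lim_{s\to\infty}\limsup_{t\to\infty}\P_x(\exists u\in N_t:\underline{\tau}(u)\ge s,\ X_u(t)\ge m_t-A)=0$, which bounds $|\E_x e^{-\langle\e^s_t,\varphi\rangle}-\E_x e^{-\langle\e_t,\varphi\rangle}|$ and legitimizes the interchange of limits. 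Without an argument of this type your double limit is unsupported. I would restructure the proof around the barrier-truncated point process $\e^s_t$ rather than around conditioned, still-killed subtrees.
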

	
	\begin{remark}
		Theorem \ref{thrm2} is our main result. It states that the logarithmic correction in the median of $\widetilde{M}_t^{-\rho}$ for BBM with absorption is identical to that of classical branching Brownian motion. 
		Additionally, the decorations are also the same. The only difference lies in the intensity of the Poisson point process.
		
		It is worth noting that BBM with absorption has a positive probability of extinction and $\Z_{\infty}^{-\rho}$ has the same probability of being degenerate. Therefore, on the non-extinction event, the limit of the extremal process is non-degenerate. 
	\end{remark}	
	\begin{remark}
		Kesten \cite[Theorem 1]{Kesten78} obtained the asymptotic behaviors of the expectation of the size of $\widetilde{N}_t^{-\rho}$ for different values of $\rho$. 
		Specifically, when $\rho < 0$, $\rho = 0$ or $\rho>0$, the asymptotic behaviors are different. For $\rho>0$, there exists a positive constant $C>0$ such that the expectation of the size of $\widetilde{N}_t^{-\rho}$ is approximately $Ct^{-3/2} e^{\lambda^*t-\frac{\rho^2}{2}t}$ as $t\to\infty$. 
		When $\rho = 0$ (or $\rho < 0$), the expectation of 
		the size of $\widetilde{N}_t^{-\rho}$ is approximately $Ct^{-1/2} e^{\lambda^*t}$ (or $Ce^{\lambda^*t}$) as $t\to\infty$. This suggests that the logarithmic correction of  $m_t^{-\rho}$ might be different depending on $\rho>0$ or not.
		However, Theorem \ref{thrm2} shows that the logarithmic correction of $m_t^{-\rho}$ does not depend on the sign of $\rho$.
	\end{remark}

	As a corollary to the above theorem, we can show that the law of $\widetilde{M}_t^{-\rho}$ converges to the law of a random shifted Gumbel random variable as $t\to\infty$.
	\begin{corollary}\label{corollary3}
		Suppose $x>0$ and $\rho<\lambda^*$. For any $z\in\mathbb{R}$, we have 
		\begin{equation}
			\lim_{t\rightarrow\infty} \mathbf{P}_x(\widetilde{M}_t^{-\rho} - m_t^{-\rho} \leq z) = \mathbf{E}_x(e^{-C_*\Z_{\infty}^{-\rho} e^{-\lambda^*z} }).
		\end{equation}
	\end{corollary}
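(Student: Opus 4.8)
The plan is to deduce the corollary directly from Theorem \ref{thrm2} by reading off the distribution of the right-most particle as a functional of the extremal process. The key observation is that the event $\{\widetilde{M}_t^{-\rho} - m_t^{-\rho} \le z\}$ can be rewritten as $\{\e_t^{-\rho}((z,\infty)) = 0\}$, that is, the extremal process places no atoms to the right of $z$. Thus I would first express the left-hand side as $\mathbf{P}_x(\e_t^{-\rho}((z,\infty)) = 0)$ and then pass to the limit using the convergence established in Theorem \ref{thrm2}.

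The first step is to justify the passage to the limit. Weak convergence of point processes does not immediately give convergence of probabilities of the form $\P(\e((z,\infty))=0)$, because the indicator of having no atoms in an open set is not a bounded continuous functional in general. The standard remedy is to use the Laplace-functional characterization: for a nonnegative continuous test function $\varphi$ with support in a neighborhood of $(z,\infty)$, the convergence $\e_t^{-\rho} \to \e := \mathrm{DPPP}(\lambda^*C_*\Z_{\infty}^{-\rho}e^{-\lambda^*y}\,\d y, \D^{\lambda^*})$ yields $\mathbf{E}_x[e^{-\langle \e_t^{-\rho}, \varphi\rangle}] \to \mathbf{E}[e^{-\langle \e, \varphi\rangle}]$. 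Taking $\varphi = \theta\, \1_{(z,\infty)}$ approximated by continuous functions and letting $\theta \to \infty$ recovers the void probability, provided the limiting process has no atom exactly at $z$ almost surely, which holds because the intensity measure is absolutely continuous.

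The second step is to compute the void probability of the limiting decorated Poisson point process. Conditioning on $\Z_{\infty}^{-\rho}$, the limit is a DPPP with intensity $\mu(\d y) = \lambda^*C_*\Z_{\infty}^{-\rho}e^{-\lambda^*y}\,\d y$ and decoration law $\D^{\lambda^*}$. The void probability $\P(\e((z,\infty))=0 \mid \Z_{\infty}^{-\rho})$ equals the probability that none of the clusters (a Poisson atom together with its decoration) contributes a point above $z$. Writing this through the Laplace functional of a DPPP, the event that the whole cluster attached to a Poisson atom at $p$ avoids $(z,\infty)$ has probability equal to the $\D^{\lambda^*}$-expectation of $\1\{\max_j d_j \le z - p\}$, and the Poisson structure then exponentiates the integral of the complementary probability against $\mu$. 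After the change of variable absorbing the decoration maximum into the intensity—this is exactly the computation that produces the constant $C_*$ and makes the Gumbel shape appear—the conditional void probability collapses to $\exp(-C_*\Z_{\infty}^{-\rho}e^{-\lambda^*z})$. Taking expectation over $\Z_{\infty}^{-\rho}$ gives the claimed formula.

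The main obstacle is the first step: turning weak convergence of $\e_t^{-\rho}$ into convergence of the specific void probability, since $(z,\infty)$ is unbounded and the functional is discontinuous. I would handle the unboundedness by using that the number of atoms of $\e_t^{-\rho}$ in any half-line $(z,\infty)$ is tight—a fact that should follow from the tail estimates underlying Theorem \ref{thrm2}—so that mass escaping to $+\infty$ contributes negligibly, and handle the discontinuity via the continuity of the limiting intensity as noted above. Once these two points are in place, the identification of the limit as the stated Gumbel functional is a routine DPPP computation.
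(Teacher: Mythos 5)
Your approach is essentially the paper's: rewrite $\{\widetilde{M}_t^{-\rho}-m_t^{-\rho}\le z\}$ as the void event for $\e_t^{-\rho}$ on $(z,\infty)$, pass to the limit via Theorem \ref{thrm2}, and compute the void probability of the limiting DPPP conditionally on $\Z_{\infty}^{-\rho}$. Two remarks. First, the technical work you sketch in your first step (approximating $\theta\,\1_{(z,\infty)}$ by test functions, letting $\theta\to\infty$, ruling out an atom at the boundary, tightness on the half-line) is precisely the content of Lemma \ref{lemma:weak_convergence}, which the paper simply invokes to upgrade convergence of Laplace functionals into convergence in law of $\max\e_t^{-\rho}$; your outline is correct but you could cite that lemma rather than re-derive it. Second, in your DPPP computation the constant $C_*$ does \emph{not} arise from ``absorbing the decoration maximum into the intensity'': it is already present in the intensity $\lambda^*C_*\Z_{\infty}^{-\rho}e^{-\lambda^*y}\,\d y$. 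What the change of variables actually produces is an extra factor $\E\bigl[e^{\lambda^*\max_j d_j}\bigr]$ over the decoration law, and your conditional void probability collapses to $\exp(-C_*\Z_{\infty}^{-\rho}e^{-\lambda^*z})$ only because, by its definition \eqref{def_D}, $\D^{\lambda^*}$ is supported on $(-\infty,0]$ with an atom at $0$, so $\max_j d_j=0$ almost surely and that factor equals $1$. This is exactly the fact the paper records as ``the decoration has no effect on the maximum,'' reducing everything to the void probability of the undecorated Poisson process; you need to state it explicitly, since without it the ``routine DPPP computation'' does not close.
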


	Now we briefly describe our strategy for proving the main results. 
	First, we introduce a model equivalent to the BBM with absorption defined above. In this model, the spatial motion is a standard Brownian motion but the absorbing barrier has a drift. There is a close connection between the two models, but the calculations are easier for the latter. 
	Therefore, in this paper, we will prove the equivalent results for the latter. The key to the proof is to guess the precise growth rate of $\widetilde{M}_t^{-\rho}$. 
	\eqref{growth rate} indicates that the asymptotic speed of the right-most particle in the BBM with absorption is $\lambda^*-\rho $ on the survival set. Therefore, as $t\to \infty$, the influence of the absorbing barrier on the right-most particle will continuously decrease. Based on this analysis, we guess that $\widetilde{M}_t^{-\rho}$ still has a logarithmic correction term and the extremal process converges to a decorated Poisson random measure. Only the intensity of Poisson point process is different from that of classical BBM.
	
	In the proof of Theorem \ref{thrm1}, we adapt some idea from \cite{Harris06,Kyprianou04} and prove the convergence of $\Z_t^{-\rho}$ using the method of non-negative supermartingale approximation. Furthermore, based on the ideas in \cite{Harris06}, we show that $\{\Z_{\infty}^{-\rho}>0 \}$ and $\{\zeta^{-\rho} = \infty \}$ agree up to a $\mathbf{P}_x$-null set.
	In the proof of Theorem \ref{thrm2}, we first define the point process $\e^s_t$ using the idea of truncating the absorption barrier at time $s$ and use $\e^s_t$ to approximate the extremal process of the BBM with absorption. Then we can use the results 
	on the extremal process of the classical BBM to obtain the convergence of $\e^s_t$. 
	Next, we use some ideas from \cite{Belloum21,Ma23} to prove that the difference between the Laplace functional of $\e^s_t$ and that of the extremal process tends to $0$ as $t,s\rightarrow\infty$. In this way, we obtain the convergence of the extremal process.
	
	The remainder of the paper is structured as follows. In the next section we introduce a model equivalent to BBM with absorption 
	and state some well-known results on branching Brownian motions. In Section \ref{Sec3}, we prove Theorem \ref{thrm1}. Section \ref{Sec4} is devoted to proving Theorem \ref{thrm2}.
	
	\section{Preliminaries}\label{Sec2}
	\subsection{An equivalent model of BBM with absorption}
	We consider the following BBM with absorption.
	Initially there is a single particle at $x>0$. This particle moves as a standard Brownian motion $B = \{B(t), t\geq 0 \}$ and is killed when it hits the line $\{(y,t):y= \rho t \}$ for some $\rho\in\R$. The particle produces $L$ offspring after an independent exponential time $\eta$ with parameter $\beta$ if 
	it survives up to this moment. 
	We assume that $L$ has distribution  $\{p_k, k\geq 1 \}$ with $\E L\in (1,\infty)$ and $\E L^2<\infty$.
	Starting from their positions of creation, each of these children evolves independently and according to the same law as their parent.
	
	We define a BBM associated to the aforementioned BBM with absorption. When particles hit the line $\{(y,t):y= \rho t\}$, we suppose that they are not killed and evolve as a standard BBM.
	Let $N_t$ be the set of particles of the BBM alive at time $t$.  For any $u\in N_t$ and $s\leq t$, let $X_u(s)$ be the position of the particle $u\in N_t$ or its ancestor at time $s$. Define
	\begin{equation}\label{def_tilde_Nt}
		\widetilde{N}_t := \{u\in N_t: \forall s\leq t, X_u(s) > \rho s \},
	\end{equation}
	then $\widetilde{N}_t$ is the set of particles of this BBM with absorption alive at time $t$. Define 
	\begin{equation}
		Y_t := \sum_{u\in N_t} \delta_{X_u(t)}, \; 
		\widetilde{Y}_t := \sum_{u\in \widetilde N_t} \delta_{X_u(t)} 
	\end{equation} 
	and $\F_t = \sigma(Y_s: s\leq t)$.
	Then $\{Y_t, t\ge 0\}$ and $\{\widetilde{Y}_t, t\ge 0\}$ are point processes describing the number and positions, at time $t$, of individuals of BBM and BBM with absorption respectively.
	We define $\P_x$ as the law of BBM with one initial particle at $x\in\R$, that is $\P_x(Y_0=\delta_x) = 1$.
	We use $\E_x$ to denote the expectation with respect to $\P_x$. For simplicity, $\P_0$ and $\E_0$ will be written as $\P$ and $\E$, respectively. 
	Since $\widetilde{Y}_t$ is a subprocess of $Y_t$, we can still work with the probabilities $\{\P_x:x>0\}$.
	Let $$\zeta:=\inf\{t>0: \widetilde{N}_t = \emptyset \}$$
	be the extinction time of the BBM with absorption and 
	$$\widetilde{M}_t:=\max\{X_u(t):u\in \widetilde{N}_t\}$$
	be the right-most position in the particle system at time $t$.
	Now we compare $\widetilde{Y}_t^{-\rho}$ and $\widetilde{Y}_t$, that is, the BBM with absorption described in Section \ref{Sec1} and \ref{Sec2}. It follows from the definition of $\widetilde{Y}_t$ that the point process $\widetilde{Y}_t$ shifted by $-\rho t$ is given by 
	$$\widetilde{Y}_t-\rho t = \sum_{u\in \widetilde N_t} \delta_{X_u(t)-\rho t}.$$
	It is easy to show that
	\begin{equation}
		\{\widetilde{Y}_t - \rho t, \P_x \} \overset{d}{=} \{\widetilde{Y}_t^{-\rho}, \mathbf{P}_x \}.
	\end{equation}
	Define 
	\begin{equation}\label{def_Z}
		\Z_t := \sum_{u\in \widetilde{N}_t} (\lambda^*t-X_u(t)) e^{\lambda^*(X_u(t)-\lambda^*t) },
	\end{equation}
	then $\{\Z_t, \P_x \} \overset{d}{=} \{\Z_t^{-\rho}, \mathbf{P}_x \}$. Recall that $m_t := \lambda^* t - \frac{3}{2\lambda^*} \log t$ 
	and define the extremal process of $\{\widetilde{Y}_t:t\ge 0\}$ by
	\begin{equation}\label{def_e_t}
		\e_t := \sum_{u\in \widetilde N_t} \delta_{X_u(t)-m_t}.
	\end{equation}
	Based on the above analysis, to prove Theorem \ref{thrm1} and \ref{thrm2}, it is equivalent to show the following theorems.
	\begin{theorem}\label{thrm1'}
		For any $x>0$ and $\rho < \lambda^*$, the limit $\Z_{\infty} := \lim_{t\rightarrow\infty} \Z_t$ exists $\P_x$-almost surely. Moreover, the events $\{\Z_{\infty}>0 \}$ and $\{\zeta = \infty \}$ agree up to a $\P_x$-null set.
	\end{theorem}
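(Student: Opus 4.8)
The plan is to work in the equivalent flat-barrier model and follow the non-negative supermartingale approximation of Kyprianou and Harris et al. The obstruction to convergence is that the weight $\lambda^* t-X_u(t)$ changes sign, so $\Z_t$ is not itself non-negative. To remedy this I would introduce a second, upper, space-time barrier $\{x=\lambda^* s+y\}$ for a parameter $y>0$, set $T^u_y:=\inf\{s\ge 0:X_u(s)\ge \lambda^* s+y\}$, and define
$$V^{(y)}_t:=\sum_{u\in\widetilde N_t,\, T^u_y>t}(y+\lambda^* t-X_u(t))\,e^{\lambda^*(X_u(t)-\lambda^* t)}.$$
Each summand is non-negative because a contributing particle satisfies $X_u(t)<\lambda^* t+y$. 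The function $h(x,t)=(y+\lambda^* t-x)e^{\lambda^*(x-\lambda^* t)}$ is space-time harmonic for the branching mechanism precisely because $(\lambda^*)^2=2\beta(m-1)$, and it vanishes on the upper barrier; hence the sum of $h(X_u(t),t)$ over the unabsorbed BBM, with each lineage stopped upon reaching the upper barrier, is a non-negative martingale. Killing particles at the lower barrier $\{x=\rho s\}$ only removes contributions whose conditional expectation equals the non-negative value $h$ at the killing site, so $V^{(y)}_t$ is a non-negative supermartingale and converges $\P_x$-a.s. to a finite limit $V^{(y)}_\infty\ge 0$.

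To pass from $V^{(y)}$ to $\Z_t$ I would use two inputs. First, the critical additive martingale $W_t=\sum_{u\in N_t}e^{\lambda^*(X_u(t)-\lambda^* t)}$ of the unabsorbed BBM converges to $0$ a.s., so the absorbed version $\widetilde W_t:=\sum_{u\in\widetilde N_t}e^{\lambda^*(X_u(t)-\lambda^* t)}$ satisfies $0\le\widetilde W_t\le W_t\to 0$ since $\widetilde N_t\subseteq N_t$. Second, writing $\tau_y:=\inf\{t:\exists\,u\in\widetilde N_t,\ X_u(t)\ge\lambda^* t+y\}$, a maximal estimate on the right-most particle gives $\P_x(\tau_y<\infty)\to 0$ as $y\to\infty$. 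On $\{\tau_y=\infty\}$ no contributing particle is ever removed by the upper barrier, so $V^{(y)}_t=y\widetilde W_t+\Z_t$; letting $t\to\infty$ yields $\Z_t\to V^{(y)}_\infty$ there. Taking the union of the increasing events $\{\tau_y=\infty\}$ over $y\to\infty$ shows that $\Z_t$ converges $\P_x$-a.s. to $\Z_\infty\ge 0$.

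For the identification of events, one inclusion is immediate: on $\{\zeta<\infty\}$ we have $\widetilde N_t=\emptyset$ for large $t$, so $\Z_t=0$ eventually and $\Z_\infty=0$; hence $\{\Z_\infty>0\}\subseteq\{\zeta=\infty\}$. For the reverse inclusion I would set $p(b):=\P_b(\Z_\infty=0)$, which is non-increasing in $b$. The branching Markov property at time $t$, together with the covariance of the weights under space-time shifts along the barrier (a subtree born at $(z,s)$ contributes $e^{\lambda^*(z-\lambda^* s)}$ times an independent copy of $\Z_\infty$ started from height $z-\rho s$, so its vanishing depends only on that height), gives $\P_x(\Z_\infty=0\mid\F_t)=\prod_{u\in\widetilde N_t}p(X_u(t)-\rho t)$. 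By L\'evy's zero-one law the right-hand side converges to $\1_{\{\Z_\infty=0\}}$. On $\{\zeta=\infty\}$ the right-most height $\widetilde M_t-\rho t\to\infty$ (its speed is $\lambda^*-\rho>0$ after the shift), so the product is bounded above by $p(\widetilde M_t-\rho t)\to p(\infty)$; provided $p(\infty)<1$, the limiting indicator must vanish, i.e. $\Z_\infty>0$ a.s. on survival.

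The main obstacle is therefore to show $p(\infty)=\lim_{b\to\infty}\P_b(\Z_\infty=0)<1$, i.e. that $\Z_\infty$ is non-degenerate from a high enough starting height; note that expectations are useless here, since the derivative object is not uniformly integrable. Here I would compare with the unabsorbed BBM, whose derivative martingale converges to a strictly positive limit a.s.: starting from a large height $b$, a sub-population avoids the absorbing barrier for a long time with probability close to one, and its contribution to $\Z_\infty$ is a positive multiple of an a.s. positive classical derivative-martingale limit, forcing $\P_b(\Z_\infty>0)>0$. Making this comparison quantitative — controlling the mass lost at the lower barrier uniformly in time — is the delicate step. Once $p(\infty)<1$ is established, monotonicity of $p$ together with the fixed-point relation $p(\infty)=f(p(\infty))$, obtained by letting the starting height tend to infinity in the first-branching decomposition (here $f(s)=\E(s^L)$ has only the fixed points $0$ and $1$ in $[0,1]$ since $p_0=0$ and $f'(1)=m>1$), even upgrades this to $p(\infty)=0$, completing the proof.
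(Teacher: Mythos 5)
Your treatment of the convergence of $\Z_t$ is essentially the paper's own proof: the truncated object $V^{(y)}_t$ is the paper's $\V_t^z$, the vanishing of $\widetilde W_t$ is its Lemma \ref{lemma:mart_W}, and the gluing over the increasing events where the upper barrier is never crossed is exactly how the paper deduces a.s.\ existence of $\Z_\infty$. Your identification of $\{\Z_\infty>0\}$ with $\{\zeta=\infty\}$, however, takes a genuinely different route: the paper shows $h(x)=\P_x(\Z_\infty=0)$ solves the one-sided F-KPP travelling-wave problem \eqref{ODE} and invokes the uniqueness theorem of Harris--Harris--Kyprianou, whereas you use L\'evy's zero-one law, the bound of the product $\prod_{u\in\N_t}p(X_u(t)-\rho t)$ by its right-most factor, and the growth rate $\widetilde M_t-\rho t\to\infty$ on survival. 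Conditional on non-degeneracy, your route is valid and arguably more elementary (it avoids the PDE uniqueness input, though it still leans on \eqref{growth rate}, which comes from the same reference).

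The genuine gap is the step you yourself flag as "delicate": proving $p(\infty)<1$, i.e.\ that $\Z_\infty$ is not almost surely zero. Everything else in your argument is conditional on this, and your sketch does not close it. Having a sub-population avoid the barrier up to a large finite time $T$ controls nothing after $T$: descendants continue to reach the line $\rho s$ (for $\rho\ge 0$ every fixed line of descent hits it a.s.), and the contribution of a subtree to $\Z_\infty$ is the limit of the \emph{absorbed} derivative-type sum, not the classical $Z_\infty$ of that subtree, so it admits no lower bound by "a positive multiple of an a.s.\ positive classical derivative-martingale limit". The whole difficulty is to show that the mass removed at the barrier over \emph{all} time does not exhaust the limit, and your remark that "expectations are useless here" points away from the fix rather than toward it. The paper's Lemma \ref{lemma:mart_V} resolves precisely this by applying first-moment analysis to the right objects: it sets $U^z_t:=V^z_t-\V^z_t$, where $V^z_t$ is Kyprianou's truncated martingale (which \emph{is} uniformly integrable, with $\E_xV^z_\infty=(z-x)e^{\sqrt2 x}$), observes that $U^z_t$ is a non-negative submartingale, and computes via many-to-one, Girsanov, and the Bessel-3 change of measure that
\begin{equation}
(z-x)e^{\sqrt{2}x}-\lim_{t\rightarrow\infty}\E_x U_t^z=(z-x)e^{\sqrt{2}x}\,\Pi_x^{(\sqrt{2},z)}\bigl(\forall s\geq 0,\ z-x+\widehat{B}_s<z+(\sqrt{2}-\rho)s\bigr)>0,
\end{equation}
the positivity coming from an explicit estimate for a Bessel-3 process staying below a line of slope $\sqrt2-\rho$; Fatou then gives $\E_x\V^z_\infty=\E_xV^z_\infty-\E_xU^z_\infty>0$, hence $\P_x(\Z_\infty=0)<1$ and in particular $p(\infty)<1$ by your monotonicity. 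This uniform-in-time control of the lost mass is the technical heart of the paper's Theorem \ref{thrm1'}, and it is exactly what your proposal leaves open.
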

	
	\begin{theorem}\label{thrm2'}
		For any $x>0$ and $\rho < \lambda^*$, we have under $\P_x$
		\begin{equation}
			\lim_{t\rightarrow\infty} \e_t= \mathrm{DPPP}\left(\lambda^*C_*\Z_{\infty}e^{-\lambda^*x}\d x, \D^{\lambda^*}\right) \mbox{ in law }
		\end{equation}
		where $C_*$ is as in \eqref{travelling} and $\D^{\lambda^*}$ is defined by \eqref{def_D}.
	\end{theorem}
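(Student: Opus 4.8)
The plan is to prove Theorem \ref{thrm2'} by identifying the limit through its Laplace functional. Recall that for point processes that accumulate only near $-\infty$, convergence in law (in the topology relevant to extremal processes) is equivalent to convergence of the Laplace functionals $\E_x[\exp(-\langle\e_t,\phi\rangle)]$ for all $\phi$ in the class $\mathcal T$ of nonnegative continuous functions whose support is bounded below, where $\langle\e_t,\phi\rangle := \sum_{u\in\widetilde N_t}\phi(X_u(t)-m_t)$. Denote by $\e_\infty := \mathrm{DPPP}(\lambda^* C_*\Z_\infty e^{-\lambda^* y}\,\d y,\D^{\lambda^*})$ the target process; conditionally on $\Z_\infty$ it has the explicit Laplace functional
\[
\E_x\left[\exp\left(-\lambda^* C_*\Z_\infty\int_\R\Big(1-\E_{\D^{\lambda^*}}\big[e^{-\langle\D^{\lambda^*},\phi(y+\cdot)\rangle}\big]\Big)e^{-\lambda^* y}\,\d y\right)\right].
\]
The task thus reduces to showing that $\E_x[\exp(-\langle\e_t,\phi\rangle)]$ converges to this quantity for every $\phi\in\mathcal T$.

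First I would introduce the truncated process. For fixed $s>0$, set $N_t^s := \{u\in N_t: X_u(r)>\rho r\ \forall r\le s\}$ and $\e_t^s := \sum_{u\in N_t^s}\delta_{X_u(t)-m_t}$, so that the absorbing barrier is switched off after time $s$. Since $\widetilde N_t\subseteq N_t^s$ we have $\e_t\le\e_t^s$ as measures, hence $\langle\e_t,\phi\rangle\le\langle\e_t^s,\phi\rangle$ for $\phi\ge 0$. Conditionally on $\F_s$, the process $\e_t^s$ is the superposition of $|\widetilde N_s|$ independent classical branching Brownian motions, one rooted at each $u\in\widetilde N_s$ started from $X_u(s)$. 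Applying the known extremal convergence for classical BBM to each summand and re-centering by $m_t$ (the logarithmic corrections $\log t$ and $\log(t-s)$ differ by a vanishing amount), each rooted BBM contributes, as $t\to\infty$, a $\mathrm{DPPP}(\lambda^* C_* Z_\infty^{(u)}e^{-\lambda^*(y-(X_u(s)-\lambda^* s))}\,\d y,\D^{\lambda^*})$, where $Z_\infty^{(u)}$ is the derivative-martingale limit of the $u$-th free sub-BBM. By the superposition property of decorated Poisson processes,
\[
\e_t^s \to \e_\infty^s := \mathrm{DPPP}\big(\lambda^* C_*\Theta_s\, e^{-\lambda^* y}\,\d y,\ \D^{\lambda^*}\big)\ \text{ in law as }t\to\infty,
\]
where $\Theta_s := \sum_{u\in\widetilde N_s}Z_\infty^{(u)}\,e^{\lambda^*(X_u(s)-\lambda^* s)}$.

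Next I would let $s\to\infty$. The branching decomposition of the derivative martingale at time $s$ gives the exact identity $\Z_\infty = \sum_{u\in\widetilde N_s}\Z_\infty^{(u)}e^{\lambda^*(X_u(s)-\lambda^* s)}$, where $\Z_\infty^{(u)}$ is the derivative-martingale limit of the $u$-th sub-BBM carrying its \emph{inherited} absorbing barrier (obtained from Theorem \ref{thrm1'} applied to the sub-BBMs, the additive-martingale part vanishing as $t\to\infty$). Comparing with $\Theta_s$, the discrepancy $\Theta_s-\Z_\infty$ is exactly the total derivative-martingale mass of sub-BBM particles killed by the barrier after time $s$. Since $\rho<\lambda^*$, on the survival event the particles that dominate the sum satisfy $X_u(s)-\rho s\approx(\lambda^*-\rho)s\to\infty$, so the barrier lies far below them and the killed mass is negligible; I would make this precise by a barrier estimate to conclude $\Theta_s\to\Z_\infty$ in probability. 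Continuity of the DPPP Laplace functional in its intensity coefficient then gives $\lim_{s\to\infty}\E_x[\exp(-\langle\e_\infty^s,\phi\rangle)]=\E_x[\exp(-\langle\e_\infty,\phi\rangle)]$.

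Combining these with the ordering $\langle\e_t,\phi\rangle\le\langle\e_t^s,\phi\rangle$, letting first $t\to\infty$ and then $s\to\infty$, yields
\[
\liminf_{t\to\infty}\E_x\big[e^{-\langle\e_t,\phi\rangle}\big]\ \ge\ \E_x\big[e^{-\langle\e_\infty,\phi\rangle}\big].
\]
The reverse inequality is the crux and the step I expect to be hardest: I must show that the extra atoms present in $\e_t^s$ but not in $\e_t$ — descendants of particles of $\widetilde N_s$ that dip below the moving barrier $\rho r$ during $(s,t]$ yet return near the front $m_t$ — contribute negligibly to $\langle\cdot,\phi\rangle$. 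Concretely I would bound $\E_x[\langle\e_t^s-\e_t,\phi\rangle]$ (or show $\langle\e_t^s-\e_t,\phi\rangle\to 0$ in probability) via the many-to-one lemma combined with Gaussian estimates for a Brownian path that crosses the line $\rho r$ after time $s$ but sits near height $\lambda^* t$ at time $t$; such a path pays an exponential cost that is uniformly small once $s$ is large, following the truncation arguments of \cite{Belloum21,Ma23}. This produces the matching bound $\limsup_{t\to\infty}\E_x[e^{-\langle\e_t,\phi\rangle}]\le\E_x[e^{-\langle\e_\infty,\phi\rangle}]$. Finally I would record that $\mathcal T$ is convergence-determining for the extremal topology, with the requisite tightness supplied by the comparison $\e_t\le\e_t^s$, thereby upgrading convergence of Laplace functionals to convergence in law and completing the proof.
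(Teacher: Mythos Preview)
Your proposal is correct and follows essentially the same strategy as the paper: truncate the absorbing barrier at time $s$ to define $\e_t^s$, use the classical BBM extremal convergence on each sub-tree rooted in $\widetilde N_s$ to identify the $t\to\infty$ limit with intensity $\Theta_s$ (the paper's $\Z_\infty^s$), show $\Theta_s\to\Z_\infty$ as $s\to\infty$ (the paper's Lemma~\ref{lemma:Z^s_converge}), and control the discrepancy $\e_t^s-\e_t$ by bounding the probability that a particle hits the barrier after time $s$ yet has a descendant near $m_t$ (the paper's Lemma preceding the proof of Theorem~\ref{thrm2}). The only cosmetic difference is that you phrase the final step asymmetrically via $\liminf/\limsup$, whereas the paper bounds $|\E_x e^{-\langle\e_t^s,\varphi\rangle}-\E_x e^{-\langle\e_t,\varphi\rangle}|$ directly by that probability and applies a triangle inequality.
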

	In the rest of the paper, we consider the BBM with absorption ${\widetilde{Y}_t}$ and assume $\beta = 1$ and $m=2$ for simplicity. Therefore, $\lambda^* = \sqrt{2}$ and $m_t = \sqrt{2}t - \frac{3}{2\sqrt{2}}\log t$.
	\subsection{Some properties of branching Brownian motion}
	Throughout this paper we use $\{B_t, t\geq 0; \Pi_x \}$ to denote a standard Brownian motion starting from $x$. Expectation with respect to $\Pi_x$ will also be denoted by $\Pi_x$ and $\Pi_0$ will be written as $\Pi$. Let $\{\mathcal{F}_t^B: t\geq 0\}$ be the natural filtration of Brownian motion. For BBMs, the many-to-one lemma (see \cite{Harris17}) is fundamental. Here we state the stopping line version, which can be found in \cite[\S 2.3]{Maillard12}.
	
	For any space-time domain $D$, define $$\tau_D:=\inf\{t\ge 0: (t,B_t)\notin D\}.$$ 
	For any $u\in \cup_{t\geq 0} N_t$, let $\tau_D(u)$ be the stopping time for $\{X_u(t)\}$.
	Define the stopping line 
	\begin{equation}\label{def_stopping_line}
		L_D := \left\{ (u,t) \in (\cup_{t\geq 0} N_t) \times [0,\infty): u\in N_t, \tau_D(u) = t \right\},
	\end{equation}
	and $N_D := \{u\in \cup_{t\geq 0} N_t: (u,t)\in L_D \mbox{ for some } t \}$. We refer to \cite{Chauvin91} for the precise definition.
	\begin{lemma}[Many-to-one Lemma]\label{lemma:many-to-one}
		Let $F:C[0,\infty)\rightarrow\R$ be a bounded measurable function
		such that $F\left(B_s,s\leq \tau_D\right)$ is $\F_{\tau_D}^B$-measurable. 
		Then for any $t>0$
		\begin{equation}\label{many-to-one}
			\E_x \left[\sum_{u\in N_{D}} F\left(X_u(s),s\leq \tau_D(u)\right) \right]  = \Pi_x\left[ e^{\tau_D} F\left(B_s,s\leq \tau_D\right) \right],
		\end{equation}
		where $C[0,\infty)$ denotes the space of continuous functions from $[0,\infty)$ to $\R$.
	\end{lemma}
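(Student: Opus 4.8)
The plan is to reduce the stopping-line identity to the classical fixed-time many-to-one formula, which for our BBM (with $\beta = 1$, $m = 2$) reads
\begin{equation}
\E_x\Big[\sum_{u\in N_t} H(X_u(s),s\le t)\Big] = e^{t}\,\Pi_x\big[H(B_s, s\le t)\big]
\end{equation}
for bounded measurable $H$ depending on the path up to time $t$; this is the well-known version recorded in \cite{Harris17}. The idea is to feed into this identity a test function that already carries the exponential weight and the restriction to the stopping line, and then to exploit the branching property along the stopping line to collapse the sum over $N_t$ onto a sum over $N_D$.

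Concretely, I would fix $t>0$ and apply the displayed formula with
\begin{equation}
H(B_s, s\le t) := \mathbf{1}_{\{\tau_D \le t\}}\, e^{\tau_D - t}\, F(B_s, s\le \tau_D),
\end{equation}
which is $\F_t^B$-measurable because $F(B_s, s\le \tau_D)$ is $\F_{\tau_D}^B$-measurable and $\{\tau_D \le t\}\in\F_t^B$. The right-hand side then equals $\Pi_x\big[\mathbf{1}_{\{\tau_D\le t\}} e^{\tau_D} F(B_s, s\le \tau_D)\big]$. For the left-hand side I would reorganize the sum over $u \in N_t$ according to the unique ancestor $w \in N_D$ of $u$ on the stopping line, which exists precisely when $\tau_D(u)\le t$. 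Since $X_u(s) = X_w(s)$ for all $s\le \tau_D(w) = \tau_D(u)$, the summand depends on $u$ only through $w$, so that
\begin{equation}
\sum_{u\in N_t} \mathbf{1}_{\{\tau_D(u)\le t\}}\, e^{\tau_D(u)-t}\, F = \sum_{w\in N_D,\, \tau_D(w)\le t} e^{\tau_D(w)-t}\, F\big(X_w(s), s\le \tau_D(w)\big)\, D_w(t),
\end{equation}
where $D_w(t) := \#\{u\in N_t : u\succeq w\}$ denotes the number of time-$t$ descendants of $w$.

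The crux is then to take expectations and condition on the stopping line: by the strong branching property along stopping lines (Chauvin \cite{Chauvin91}), given $L_D$ the subtrees rooted at the particles $w\in N_D$ are independent BBMs started at their exit space-time points, whence $\E_x\big[D_w(t)\mid L_D\big] = e^{t-\tau_D(w)}$ for each $w$ with $\tau_D(w)\le t$. This exactly cancels the weight $e^{\tau_D(w)-t}$, and yields
\begin{equation}
\E_x\Big[\sum_{w\in N_D,\, \tau_D(w)\le t} F\big(X_w(s), s\le \tau_D(w)\big)\Big] = \Pi_x\big[\mathbf{1}_{\{\tau_D\le t\}} e^{\tau_D} F(B_s, s\le \tau_D)\big].
\end{equation}
If $D$ is bounded in the time direction by $t$, so that $\tau_D\le t$, this is already \eqref{many-to-one}; in general one lets $t\to\infty$ and applies monotone convergence (first for $F\ge 0$, then for signed bounded $F$ by splitting into positive and negative parts) to recover the stated identity.

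I expect the main obstacle to be the rigorous justification of the conditional identity $\E_x[D_w(t)\mid L_D] = e^{t-\tau_D(w)}$, that is, the strong branching (Markov) property along the stopping line $L_D$, together with the measurability of $L_D$ and of the descendant counts $D_w(t)$; this is precisely where the stopping-line formalism of \cite{Chauvin91} is needed, and one must verify that $L_D$ is a genuine dissecting stopping line so that each line of descent meets it at most once. A secondary point is the passage $t\to\infty$: for $F\ge 0$ monotone convergence applies directly, while for signed $F$ one needs $\Pi_x[e^{\tau_D}|F|] < \infty$, which follows from the boundedness of $F$ together with the integrability of $e^{\tau_D}$ in the regime in which the lemma will be invoked.
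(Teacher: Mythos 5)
The paper offers no proof of this lemma at all: it is quoted from Maillard's thesis \cite[\S 2.3]{Maillard12}, so there is no in-paper argument to compare against, and your task was to supply one. Your reduction to the classical fixed-time many-to-one formula is a correct and essentially self-contained route. The key steps check out: the test function $H=\mathbf{1}_{\{\tau_D\le t\}}e^{\tau_D-t}F(B_s,s\le\tau_D)$ is $\F_t^B$-measurable; every $u\in N_t$ with $\tau_D(u)\le t$ has a unique ancestor on the stopping line (no line of descent meets $L_D$ twice, since $\tau_D$ is a first exit time), and the summand depends on $u$ only through that ancestor, so the regrouping over $w\in N_D$ with multiplicity $D_w(t)$ is legitimate; the set $\{w\in N_D:\tau_D(w)\le t\}$ has cardinality at most $\#N_t$ (each such $w$ has at least one descendant in $N_t$, as $L\ge 1$, and distinct $w$ have disjoint descendant sets), so Tonelli justifies the termwise conditioning for $F\ge 0$; and the strong branching property along $L_D$ (Chauvin \cite{Chauvin91}, Jagers \cite{Jagers89}) gives $\E_x[D_w(t)\mid\F_{L_D}]=e^{\beta(m-1)(t-\tau_D(w))}=e^{t-\tau_D(w)}$, cancelling the weight exactly. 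Two points you flag deserve to be made explicit rather than left as caveats. First, after letting $t\to\infty$ the right-hand side must be read as $\Pi_x[e^{\tau_D}F\,\mathbf{1}_{\{\tau_D<\infty\}}]$, consistent with $N_D$ collecting only those lines of descent that actually exit $D$. Second, your worry about needing $\Pi_x[e^{\tau_D}|F|]<\infty$ for signed $F$ is genuine and not vacuous: for the domain $\underline{D}_0^\rho$ used in Section 4 one has $\Pi_x[e^{\underline{\tau}_0^\rho}]=\infty$ whenever $|\rho|<\sqrt{2}$, so the identity for general bounded signed $F$ can fail to make sense; but this is harmless here because the paper only ever applies the lemma to nonnegative $F$ (indicators times nonnegative weights), for which monotone convergence yields the identity with both sides possibly infinite. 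In short, the proposal is sound and supplies a proof where the paper supplies only a citation.
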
	 
	\begin{remark}
		When $\tau_D = t$, the many-to-one formula above reduces to the following classical many-to-one formula:
		\begin{equation}\label{classical_many-to-one}
			\E_x \left[\sum_{u\in N_t} F\left(X_u(s),s\leq t\right) \right]  = \Pi_x\left[ e^t F\left(B_s,s\leq t\right) \right].
		\end{equation}
	\end{remark}
	
	Now we introduce some results on the additive and derivative martingales of BBM. Define the process
	\begin{equation}
		W_t := \sum_{u\in N_t} e^{\sqrt{2}X_u(t) - 2t},
	\end{equation}
	then $\{W_t, \P_x\}$ is called the additive martingale of BBM starting from $x$. $\{W_t\}$ corresponds to $\{W_t(\lambda)\}$ with $\lambda = \underline{\lambda}$ in \cite{Kyprianou04}. 
	Similarly, the derivative martingale $\{Z_t\}$ given by \eqref{def_derivative} corresponds to $\{\partial W_t(\underline{\lambda})\}$ in \cite{Kyprianou04}.
	By \cite[Theorem 1 and 3]{Kyprianou04}, we have the following lemma.
	\begin{lemma}\label{lemma:add_deri}
		For any $y\in\R$, the limits $W_{\infty} := \lim_{t\rightarrow\infty} W_t$ and $Z_{\infty} := \lim_{t\rightarrow\infty} Z_t$ exist $\P_x$-almost surely. Furthermore, $W_{\infty} = 0$ and $Z_{\infty}\in (0,\infty)$ $\P_x$-almost surely.
	\end{lemma}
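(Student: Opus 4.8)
The plan is to treat the two martingales separately: handle the additive martingale $W_t$ first by ordinary non-negative martingale convergence, and then the signed derivative martingale $Z_t$ by approximation with a non-negative supermartingale obtained from a moving barrier.

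For $W_t$, a direct computation (or the many-to-one formula \eqref{classical_many-to-one}) shows that $\{W_t\}$ is a non-negative $\P_x$-martingale, so by the martingale convergence theorem the limit $W_\infty$ exists in $[0,\infty)$ $\P_x$-almost surely. To see that $W_\infty = 0$ I would use the spine change-of-measure machinery in the spirit of \cite{Kyprianou04}: under the tilted measure $\mathbb{Q}_x$ with $\d\mathbb{Q}_x/\d\P_x\big|_{\F_t} = W_t/W_0$, the system develops a spine $\xi$ that moves as a Brownian motion with drift $\lambda^* = \sqrt{2}$. Since $\lambda^*$ is exactly the critical parameter, the recentred spine $\xi_t - \sqrt{2}\,t$ is a driftless (hence oscillatory, recurrent) Brownian motion, and the spine decomposition forces $W_t \to \infty$ $\mathbb{Q}_x$-almost surely; by the Lyons--Pemantle--Peres dichotomy this is equivalent to $W_\infty = 0$ $\P_x$-almost surely.

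For the derivative martingale the difficulty is that the weights $(\sqrt{2}\,t - X_u(t))$ change sign, so $Z_t$ is not non-negative and the convergence theorem does not apply directly. I would introduce, for each $y>0$, the truncated process
\begin{equation*}
 V_t^{(y)} := \sum_{u\in N_t} (\sqrt{2}\,t + y - X_u(t))\, e^{\sqrt{2}\,X_u(t) - 2t}\, \1_{\{ X_u(s)\le \sqrt{2}\,s + y,\ \forall s\le t \}},
\end{equation*}
in which a particle is discarded once its trajectory crosses the line $\{x = \sqrt{2}\,s + y\}$. On the surviving event the weight $\sqrt{2}\,t + y - X_u(t)$ is non-negative, so $V_t^{(y)}\ge 0$. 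Observing that $(s,x)\mapsto(\sqrt{2}\,s + y - x)e^{\sqrt{2}\,x - 2s}$ is space-time harmonic for the branching dynamics and vanishes on the barrier, one checks via the stopping-line many-to-one formula \eqref{many-to-one} that $\{V_t^{(y)}\}$ is a non-negative supermartingale, hence converges $\P_x$-almost surely to a finite limit $V_\infty^{(y)}$.

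It remains to pass back from $V_t^{(y)}$ to $Z_t$ and to settle the dichotomy for the limit. For fixed $t$ the set $N_t$ is finite with continuous trajectories, so for $y$ large enough no particle has crossed the barrier and $V_t^{(y)} = Z_t + y\,W_t$ exactly; combining the almost-sure convergence of the supermartingales $V^{(y)}$ with $W_\infty = 0$ from the first step upgrades to the almost-sure convergence of $Z_t$, and letting $y\to\infty$ identifies the limit as $Z_\infty = \lim_y V_\infty^{(y)} \in [0,\infty)$ (the $y\,W_t$ correction disappears precisely because $W_\infty = 0$). Finally, to prove $Z_\infty > 0$ almost surely I would invoke a $0$--$1$ law from the branching property: decomposing at a fixed time $t_0$ and again using $W_\infty = 0$ yields the self-similar identity $Z_\infty = \sum_{v\in N_{t_0}} e^{\sqrt{2}\,X_v(t_0) - 2t_0}\, Z_\infty^{(v)}$ with the $Z_\infty^{(v)}$ conditionally i.i.d.\ copies of $Z_\infty$, so $q := \P_x(Z_\infty = 0)$ satisfies $q = \E_x[q^{|N_{t_0}|}]$, and since $|N_{t_0}|\to\infty$ almost surely this forces $q\in\{0,1\}$. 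Ruling out $q=1$ is the main obstacle, because the signed martingale $Z_t$ has $\E_x[Z_t] = -x\,e^{\sqrt{2}\,x} < 0$ and is not uniformly integrable, so positivity cannot be read off from the mean; I would instead derive $\P_x(Z_\infty > 0) > 0$ from the non-degeneracy of the truncated supermartingale, which starts from the strictly positive value $V_0^{(y)} = (y - x)e^{\sqrt{2}\,x}$ for $y > x$ and whose escape of mass can be controlled by a second-moment estimate along the barrier.
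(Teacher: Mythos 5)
The paper does not actually prove this lemma: it is imported verbatim from Kyprianou \cite{Kyprianou04} (Theorems 1, 3 and 13 there), so the only meaningful comparison is with that source, and your sketch is essentially a faithful reconstruction of Kyprianou's argument --- spine change of measure and the Lyons--Pemantle--Peres dichotomy for $W_\infty=0$ at the critical parameter, truncation of the signed martingale at the moving barrier $y+\sqrt{2}s$, and a $0$--$1$ law plus non-degeneracy of the truncated limit for $Z_\infty>0$. Two points of precision are worth flagging. First, $V_t^{(y)}$ is in fact a martingale, not merely a supermartingale (the weight $\sqrt{2}t+y-x$ is exactly the harmonic function vanishing on the barrier), and the real content of $Z_\infty>0$ is its uniform integrability; your proposed second-moment route is legitimate here only because the paper assumes $\E L^2<\infty$, whereas Kyprianou works under an $L(\log L)^2$ condition via the Bessel-3 spine. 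Second, the identity $V_t^{(y)}=Z_t+yW_t$ holds simultaneously for all $t$ only on the event $\gamma^{(y,\sqrt{2})}$ that no particle \emph{ever} crosses the barrier, whose probability increases to $1$ as $y\to\infty$ (cf.\ \eqref{eq_gamma_to_1}); your phrasing ``for fixed $t$, for $y$ large enough'' chooses $y$ depending on $t$ and $\omega$ and should be replaced by conditioning on this event before letting $y\to\infty$. With these adjustments the argument is the standard one and is correct.
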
   
	Note that $Z_t$ can be negative, Kyprianou \cite{Kyprianou04} used a non-negative martingale $\{V_t^z, \P\}$ to approximate $\{Z_t, \P\}$. For any $z>0$, define 
	\begin{equation}\label{def_N_hat}
		\widehat{N}_t^z := \{u\in N_t: \forall s\leq t, X_u(s) < z +\sqrt{2}s \}
	\end{equation}
	and
	\begin{equation}\label{def_mart_V}
		V_t^z := \sum_{u\in\widehat{N}_t^z}  (z+\sqrt{2}t-X_u(t)) e^{\sqrt{2}(X_u(t)-\sqrt{2}t) }.
	\end{equation}
	According to \cite[Theorem 13]{Kyprianou04}, for any $x<z$, the limit $V_{\infty}^z := \lim\limits_{t\rightarrow\infty} V_t^z$ exists $\P_x$-almost surely and is an $L^1(\P_x)$-limit. 
	Define the event
	\begin{equation}\label{def_gamma}
		\gamma^{(z,\sqrt{2})} := \{\forall t\geq 0, \forall u\in N_t, X_u(t) \leq z + \sqrt{2}t \}.
	\end{equation}
	By the proof of \cite[Corollary 10]{Kyprianou04}, we know that on $\gamma^{(z,\sqrt{2})}$,
	\begin{equation}
		V_{\infty}^z = Z_{\infty}
	\end{equation}
	and 
	\begin{equation}\label{eq_gamma_to_1}
		\P(\gamma^{(z,\sqrt{2})}) \uparrow 1 \mbox{ as } z\uparrow\infty.
	\end{equation}
	To prove Theorem \ref{thrm2}, we also need some results about $M_t$ and the extremal process of the classical BBM. 
	The following estimate of the tail probability of $M_t$ can be found in \cite[Corollary 10]{ABK12}.
	\begin{lemma}\label{lemma:M_tail}
		For $y>1$ and $t\geq t_0$, where $t_0$ is a large constant, 
		\begin{equation}
			\P\left( M_t \geq \sqrt{2}t - \frac{3}{2\sqrt{2}}\log t + y \right) \leq b y e^{-\sqrt{2}y - \frac{y^2}{2t} + \frac{3}{2\sqrt{2}} y \frac{\log t}{t}}
		\end{equation}
		for some constant $b>0$.
	\end{lemma}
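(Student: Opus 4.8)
The estimate is the classical right-tail bound for the BBM maximum, so the plan is to reprove it by the truncated first-moment (barrier) method, whose output matches the stated right-hand side term by term. First I would strip off the branching structure via the classical many-to-one formula \eqref{classical_many-to-one}: writing $a=m_t+y=\sqrt2\,t-\tfrac{3}{2\sqrt2}\log t+y$,
\[
\P(M_t\ge a)\le \E\Big[\#\{u\in N_t: X_u(t)\ge a\}\Big]=e^{t}\,\Pi(B_t\ge a).
\]
A direct Gaussian computation (set $a=\sqrt2\,t+c$ with $c=y-\tfrac{3}{2\sqrt2}\log t$ and use $e^{t}\bar\Phi(a/\sqrt t)$) produces \emph{exactly} the factor $e^{-\sqrt2 y-\frac{y^2}{2t}+\frac{3}{2\sqrt2}y\frac{\log t}{t}}$, but multiplied by a leftover prefactor of order $t$ rather than the factor $y$ demanded by the lemma. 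Thus the naive first moment overcounts by $\asymp t$, and the whole point of the argument is to convert this $t$ into the linear prefactor $y$.

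To gain the missing factor $y/t$, I would rerun the first moment with a barrier. Fix a curve $\gamma(s)$ (a straight chord of slope $\approx\sqrt2$ to the endpoint, equivalently $\sqrt2 s$ lowered by the logarithmic correction) and estimate the truncated count
\[
\E\Big[\sum_{u\in N_t}\1_{\{X_u(t)\ge a\}}\1_{\{X_u(s)\le\gamma(s)\,\forall s\le t\}}\Big]=e^{t}\,\Pi\big(B_t\ge a,\ B_s\le\gamma(s)\ \forall s\le t\big).
\]
After a Girsanov tilt by $\sqrt2$ that removes the drift and cancels the $e^t$, the surviving event is that a Brownian path stays below a nearly horizontal barrier while ending an amount $\asymp y$ above its time-$t$ value; a ballot-type (reflection) estimate for the Brownian bridge then supplies a probability of order $y/t$, which kills the spurious $t$ and leaves the required linear-in-$y$ prefactor, with the Gaussian exponent unchanged.

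Finally I would control the complementary event that the ancestral line of a high particle ever exceeds $\gamma$. Decomposing $\{M_t\ge a\}$ according to whether the maximizing trajectory stays below $\gamma$ or first crosses it, the crossing contribution is bounded by the expected number of barrier crossings, which is precisely where the stopping-line many-to-one formula \eqref{many-to-one} enters; choosing $\gamma$ slightly above $\sqrt2 s$ makes this term exponentially small and hence negligible against $y e^{-\sqrt2 y}$. The main obstacle is the balancing act in these last two steps: the barrier must sit low enough that the ballot estimate in the main term yields the sharp prefactor $y$ together with the \emph{exact} correction exponents $-\frac{y^2}{2t}$ and $+\frac{3}{2\sqrt2}y\frac{\log t}{t}$, yet high enough that the crossing term remains subdominant, all uniformly for $y>1$ and $t\ge t_0$. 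Securing the ballot estimate with the correct uniformity in both $y$ and $t$ (rather than merely as an asymptotic as $y\to\infty$) is the genuine technical crux.
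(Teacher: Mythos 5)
The paper itself contains no proof of this lemma: it is quoted verbatim from \cite[Corollary 10]{ABK12}, where it is deduced from Bramson's memoir (the Feynman--Kac representation of F-KPP solutions, the maximum principle, and estimates for Brownian bridges staying below \emph{logarithmically curved} barriers). So your proposal must stand on its own, and as written it has a genuine gap: the two demands you place on a straight-line barrier are mutually incompatible, and resolving that incompatibility is precisely the mathematical content of the lemma.

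Concretely, write $a=m_t+y$ and consider your two candidate barriers. Option A, the chord ending at $a+O(1)$ (your ``straight chord of slope $\approx\sqrt2$ to the endpoint''): the ballot estimate then does produce the prefactor $y$, but the crossing term blows up. At time $s=\epsilon t$ the chord sits only $y+O(1)+\tfrac{3}{2\sqrt2}\bigl((1-\epsilon)\log t+\log\epsilon\bigr)$ above the front $m_{\epsilon t}$, so first crossings concentrate near time $t$ and their expected number is of order $y\,t\,e^{-2\sqrt2 y}$ (up to Gaussian corrections), which exceeds the target $y e^{-\sqrt2 y}$ whenever $e^{\sqrt2 y}\le t$, i.e.\ throughout the crucial regime $1<y\lesssim\tfrac{1}{\sqrt2}\log t$. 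Worse, the \emph{true} crossing probability of this chord is itself $\asymp y e^{-\sqrt2 y}$ (it contains the event $\{M_s\ge m_s+y+O(1)\}$ for $s$ close to $t$), so any non-wasteful bound on it is equivalent to the lemma --- the argument becomes circular. Option B, $\gamma(s)=\sqrt2 s+y+O(1)$ (your ``slightly above $\sqrt2 s$''): now the stopping-line/first-passage bound gives a crossing term $O(e^{-\sqrt2 y})$, which is fine for $y\lesssim\sqrt t$ (though for $y\gg\sqrt t$ it already misses the required factor $e^{-y^2/2t}$); but this barrier ends $\tfrac{3}{2\sqrt2}\log t$ \emph{above} $a$, so the ballot end-gap is $\asymp\log t$ rather than $O(1)$, and the truncated first moment comes out as $\asymp y(\log t)\,e^{-\sqrt2 y-y^2/2t+\cdots}$, off from the lemma by a factor $\log t$. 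No straight line can do both jobs: one needs Bramson's curved barrier, roughly $\sqrt2 s+y-\tfrac{3}{2\sqrt2}\log\tfrac{t}{t-s+1}$, which tracks $\sqrt2 s+y$ in the bulk and descends to $a+O(1)$ only at the end; both the ``ballot estimate'' and the crossing estimate then become statements about bridges below a line-plus-logarithm curve, and proving that the logarithmic curvature costs only a multiplicative constant, uniformly in $y>1$ and $t\ge t_0$, is the nontrivial core of Bramson's analysis underlying \cite[Corollary 10]{ABK12}. Your sketch correctly names this balancing act as the crux, but the specific barriers it proposes provably cannot achieve it, so the plan as stated does not yield the lemma.
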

	
	Let $\mathcal{T}$ be the set of continuous non-negative bounded functions, with support bounded on the left. For any measurable function $f$ and $\sigma$-finite measure $\mu$ on $\R$, we use $\langle f,\mu \rangle$ to denote the integral of $f$ with respect to $\mu$. The following result can be found in \cite[Lemma 4.4]{Berestycki22}. 
	\begin{lemma}\label{lemma:weak_convergence}
		Let $(\mathcal{P}_t,\mathcal{P}_{\infty})$ be point processes on $\R$ with $\mathcal{P}_{\infty}((0,\infty)) < \infty$ a.s. Let $\max \mathcal{P}_t$ ($t\in [0,\infty])$ be the position of the rightmost atom in the point measure $\mathcal{P}_t$. Then the following statements are equivalent: 
		
		$\mathrm{(i)}$ $\lim_{t\rightarrow\infty} \mathcal{P}_t = \mathcal{P}_{\infty}$ and $\lim_{t\rightarrow\infty} \max \mathcal{P}_t = \max \mathcal{P}_{\infty}$ in law.
		
		$\mathrm{(ii)}$ $\lim_{t\rightarrow\infty} (\mathcal{P}_t, \max \mathcal{P}_t) = (\mathcal{P}_{\infty},\max \mathcal{P}_{\infty})$ in law.
		
		$\mathrm{(iii)}$ For all $\varphi\in\mathcal{T}$, $\lim_{t\rightarrow\infty} \E[e^{-\langle \mathcal{P}_t, \varphi\rangle}] = \E[e^{-\langle \mathcal{P}_{\infty}, \varphi\rangle}]$.
	\end{lemma}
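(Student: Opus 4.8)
The plan is to prove the cyclic chain of implications $\mathrm{(ii)}\Rightarrow\mathrm{(i)}\Rightarrow\mathrm{(iii)}\Rightarrow\mathrm{(ii)}$. The organizing observation is that the class $\mathcal{T}$ contains every non-negative continuous compactly supported function, so testing $\mathrm{(iii)}$ against such $\varphi$ already encodes vague convergence $\mathcal{P}_t\to\mathcal{P}_\infty$ in law; the extra strength of $\mathcal{T}$ comes from functions whose support is bounded only on the left and which need not decay at $+\infty$, and these are precisely what detect any mass escaping to $+\infty$ and, in the limit $\varphi = n g$, encode the law of the maximum. The implication $\mathrm{(ii)}\Rightarrow\mathrm{(i)}$ is immediate, since convergence of the pair $(\mathcal{P}_t,\max\mathcal{P}_t)$ forces convergence of each marginal.

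For $\mathrm{(i)}\Rightarrow\mathrm{(iii)}$, fix $\varphi\in\mathcal{T}$, supported in some $[a,\infty)$ and bounded. First I would record that convergence of $\max\mathcal{P}_t$ in law makes the family $\{\max\mathcal{P}_t\}$ tight, i.e. $\sup_t\P(\max\mathcal{P}_t>K)\to0$ as $K\to\infty$. Next I truncate: let $\chi_K$ be a continuous cutoff equal to $1$ on $(-\infty,K]$ and $0$ on $[K+1,\infty)$, and set $\varphi_K:=\varphi\,\chi_K\in C_c^+$. Since $|\langle\mathcal{P}_t,\varphi\rangle-\langle\mathcal{P}_t,\varphi_K\rangle|\le\|\varphi\|_\infty\,\mathcal{P}_t((K,\infty))$ vanishes on $\{\max\mathcal{P}_t\le K\}$, tightness gives $|\E[e^{-\langle\mathcal{P}_t,\varphi\rangle}]-\E[e^{-\langle\mathcal{P}_t,\varphi_K\rangle}]|\le 2\P(\max\mathcal{P}_t>K)$ uniformly in $t\in[0,\infty]$. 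For fixed $K$ the compact support of $\varphi_K$ lets me pass to the limit using the assumed vague convergence, and sending $K\to\infty$ closes the argument.

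The substantive direction is $\mathrm{(iii)}\Rightarrow\mathrm{(ii)}$, which I would carry out in three steps. Vague convergence $\mathcal{P}_t\to\mathcal{P}_\infty$ follows by restricting $\mathrm{(iii)}$ to $C_c^+\subset\mathcal{T}$. Tightness of the maximum is obtained from the non-decaying test functions: taking $\varphi_K\in\mathcal{T}$ with $0\le\varphi_K\le1$, $\varphi_K\equiv1$ on $[K,\infty)$ and $\varphi_K\equiv0$ below $K-1$, the bound $\langle\mathcal{P}_t,\varphi_K\rangle\ge\mathcal{P}_t([K,\infty))$ together with $\E[e^{-\langle\mathcal{P}_\infty,\varphi_K\rangle}]\to1$ as $K\to\infty$ (a consequence of $\mathcal{P}_\infty((0,\infty))<\infty$) and $\mathrm{(iii)}$ yields $\sup_t\P(\max\mathcal{P}_t>K)\to0$. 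Finally, to identify the limiting law of $\max\mathcal{P}_t$ I would write, for continuity levels $z<K$ with $\P(\mathcal{P}_\infty(\{z\})>0)=0$,
\begin{equation*}
\{\max\mathcal{P}_t\le z\}=\{\mathcal{P}_t((z,\infty))=0\},\qquad \mathcal{P}_t((z,\infty))=\mathcal{P}_t((z,K])+\mathcal{P}_t((K,\infty)),
\end{equation*}
and control the two pieces separately: the compact piece $\mathcal{P}_t((z,K])$ converges in law by vague convergence (portmanteau at continuity points), while the tail piece is uniformly negligible by tightness. Letting $K\to\infty$ and using $\mathcal{P}_\infty((z,\infty))<\infty$ gives $\P(\max\mathcal{P}_t\le z)\to\P(\max\mathcal{P}_\infty\le z)$ at almost every $z$, hence $\max\mathcal{P}_t\to\max\mathcal{P}_\infty$ in law. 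To upgrade the two separate convergences to the joint statement $\mathrm{(ii)}$, I would use that, off an event whose probability is controlled by $\sup_t\P(\max\mathcal{P}_t>K)$ and by the probability of an atom near a fixed level $z_0$, the maximum coincides with the largest atom inside a fixed compact window $(z_0,K]$, on which it is a vaguely continuous functional; continuous mapping then delivers the joint convergence.

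The main obstacle is precisely the control of mass at $+\infty$: vague convergence alone permits atoms to escape to $+\infty$, which would break both the identification of $\max\mathcal{P}_t$ and the joint statement. The hypothesis $\mathcal{P}_\infty((0,\infty))<\infty$, ensuring the limiting maximum is finite and that the limit has finitely many atoms above any level, together with the non-decaying test functions of $\mathcal{T}$, is exactly what rules this out; making the tightness estimate and the attendant continuity-point and double-limit interchanges fully rigorous is where the real care is needed.
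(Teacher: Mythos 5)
The paper offers no proof of this lemma to compare against: it is quoted verbatim from \cite{Berestycki22} (Lemma 4.4 there) and used as an imported tool, so your argument has to be judged on its own merits. On those merits it is correct. The cyclic scheme works: (ii)$\Rightarrow$(i) is projection onto marginals; (i)$\Rightarrow$(iii) by cutting $\varphi$ off at a high level $K$, noting the two Laplace functionals agree on $\{\max\mathcal{P}_t\le K\}$, and using control of $\P(\max\mathcal{P}_t>K)$; (iii)$\Rightarrow$(ii) by extracting vague convergence from $C_c^+\subset\mathcal{T}$, getting tightness of the maxima from the non-decaying test functions together with $\mathcal{P}_\infty((0,\infty))<\infty$ a.s., and then identifying the law of the maximum and the joint law through a windowed maximum plus the continuous mapping theorem. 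This is a standard and sound route to the statement.

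Three points in your write-up need tightening, none fatal, and they are exactly where you say ``real care is needed.'' First, convergence in law of $\max\mathcal{P}_t$ does not give $\sup_{t}\P(\max\mathcal{P}_t>K)\to 0$; it gives, via portmanteau and $\max\mathcal{P}_\infty<\infty$ a.s., only $\lim_{K\to\infty}\limsup_{t\to\infty}\P(\max\mathcal{P}_t>K)=0$. That is all your three-epsilon arguments use, but it should be stated as a limsup. Second, in the paper's application the point processes are empty with positive probability (the absorbed BBM dies out), so $\max\mathcal{P}_t$ must be treated as a $[-\infty,\infty)$-valued random variable; the comparison between $\max\mathcal{P}_t$ and the windowed maximum $F_{z_0,K}(\mathcal{P}_t):=\max(\mathcal{P}_t|_{(z_0,K]})$ should be made in a metric on the compactified line, so that on $\{\max\mathcal{P}_t\le z_0\}$ the two values are close in the metric (both near $-\infty$) rather than the event being improbable. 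Third, the a.s.\ vague continuity of $\mu\mapsto F_{z_0,K}(\mu)$ at $\mathcal{P}_\infty$ requires $\P(\mathcal{P}_\infty(\{z_0\})>0)=\P(\mathcal{P}_\infty(\{K\})>0)=0$, which holds for all but countably many levels, and the identification of the limiting distribution function needs $z$ to be a continuity point of the law of $\max\mathcal{P}_\infty$; both restrictions should be made explicit and then removed by density. As a remark, an alternative way to get (iii)$\Rightarrow$(ii) without continuous mapping is to encode the joint law directly in the Laplace functionals: for $\psi\in\mathcal{T}$ continuous, vanishing on $(-\infty,z]$ and strictly positive on $(z,\infty)$, one has $\E[e^{-\langle\mathcal{P},\varphi\rangle}\1_{\{\max\mathcal{P}\le z\}}]=\lim_{n\to\infty}\E[e^{-\langle\mathcal{P},\varphi+n\psi\rangle}]$, and one then justifies exchanging $n\to\infty$ with $t\to\infty$; this trades your geometric argument for an exchange-of-limits estimate, with comparable total effort.
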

	
	The following lemma gives the convergence of Laplace functionals of the extremal process for BBM, see, for example, \cite[Lemma 3.4]{Belloum21}.
	\begin{lemma}\label{lemma:e_t_Laplace}
		For all function $\varphi\in\mathcal{T}$, it holds that
		\begin{equation}
			\lim_{t\rightarrow\infty} \E\left[e^{-\sum_{u\in N_t} \varphi(X_u(t)-m_t) }\right] = \E\left[ \exp\left\{ -C_* Z_{\infty} \int \left(1-\E (e^{-\langle \D^{\sqrt{2}},\varphi(\cdot+z) \rangle })\right) \sqrt{2}e^{-\sqrt{2}z} \d z \right\} \right].
		\end{equation}
	\end{lemma}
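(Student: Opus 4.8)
The plan is to deduce this identity from the already-established convergence of the extremal process of classical BBM, using Lemma \ref{lemma:weak_convergence} to upgrade that convergence to the class $\mathcal{T}$ of test functions. Write $\e_t^{\mathrm{cl}} := \sum_{u\in N_t}\delta_{X_u(t)-m_t}$ for the classical extremal process, so that the left-hand side is exactly $\E[e^{-\langle\e_t^{\mathrm{cl}},\varphi\rangle}]$. By the results of A\"{i}d\'{e}kon et al. \cite{ABBS13} and Arguin et al. \cite{ABK13} recalled in the introduction, $\e_t^{\mathrm{cl}}$ converges in law (in the vague topology) to $\e_\infty := \mathrm{DPPP}(\sqrt{2}C_*Z_\infty e^{-\sqrt{2}x}\,\d x,\D^{\sqrt{2}})$, and by Bramson's theorem \cite{Bramson78} the maximum $M_t - m_t = \max\e_t^{\mathrm{cl}}$ converges in law to $\max\e_\infty$. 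Since the limit satisfies $\e_\infty((0,\infty)) < \infty$ almost surely (only the finitely many cluster centers in $(0,\infty)$, each carrying a decoration with atoms in $(-\infty,0]$, contribute there), statement (i) of Lemma \ref{lemma:weak_convergence} holds for the pair $(\e_t^{\mathrm{cl}},\e_\infty)$, and its equivalence with (iii) yields $\lim_{t\to\infty}\E[e^{-\langle\e_t^{\mathrm{cl}},\varphi\rangle}] = \E[e^{-\langle\e_\infty,\varphi\rangle}]$ for every $\varphi\in\mathcal{T}$. It then remains only to evaluate this limiting Laplace functional.

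For the second step I would compute $\E[e^{-\langle\e_\infty,\varphi\rangle}]$ by conditioning on $Z_\infty$. Given $Z_\infty$, the intensity $\mu := \sqrt{2}C_*Z_\infty e^{-\sqrt{2}x}\,\d x$ is deterministic, the cluster centers $\{p_i\}$ form a Poisson point process with intensity $\mu$, and each $p_i$ carries an independent decoration of law $\D^{\sqrt{2}}$, so that $\e_\infty = \sum_{i,j}\delta_{p_i+d^i_j}$. Setting $G(z) := \E[e^{-\langle\D^{\sqrt{2}},\varphi(\cdot+z)\rangle}]$ for the conditional Laplace transform contributed by a single cluster centered at $z$, the independence of the decorations gives
\[
\E\big[e^{-\langle\e_\infty,\varphi\rangle}\,\big|\,Z_\infty\big] = \E\Big[\textstyle\prod_i G(p_i)\,\Big|\,Z_\infty\Big],
\]
and the exponential formula for Poisson point processes turns the right-hand side into $\exp\big(-\int(1-G(z))\,\mu(\d z)\big)$. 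Substituting $\mu$ and taking expectation over $Z_\infty$ produces exactly
\[
\E\Big[\exp\Big\{-C_*Z_\infty\int\big(1-\E(e^{-\langle\D^{\sqrt{2}},\varphi(\cdot+z)\rangle})\big)\sqrt{2}e^{-\sqrt{2}z}\,\d z\Big\}\Big],
\]
which is the claimed right-hand side.

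Two points need to be checked. First, the Poisson exponential formula requires $\int(1-G(z))\,\mu(\d z) < \infty$; this I would verify by noting that, since $\varphi\in\mathcal{T}$ has support bounded on the left, say in $[a,\infty)$, and the atoms of $\D^{\sqrt{2}}$ lie in $(-\infty,0]$, one has $\langle\D^{\sqrt{2}},\varphi(\cdot+z)\rangle = 0$ and hence $1-G(z)=0$ for all $z<a$, while $1-G(z)\le 1$ is controlled at $+\infty$ by the integrable weight $e^{-\sqrt{2}z}$. Second, and this is the genuine obstacle, the functions in $\mathcal{T}$ are not compactly supported, so the vague convergence of \cite{ABBS13,ABK13} does not by itself control $\langle\e_t^{\mathrm{cl}},\varphi\rangle$; the role of Lemma \ref{lemma:weak_convergence} is precisely to close this gap, at the price of having to supply the joint convergence of the maximum. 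Thus the argument hinges on having both the process-level convergence and the convergence of $M_t-m_t$ simultaneously in hand, after which the computation is a routine Poisson superposition identity.
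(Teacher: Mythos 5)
Your argument is correct, but note that the paper does not actually prove this lemma at all: it simply cites \cite[Lemma 3.4]{Belloum21} and moves on. What you have written is therefore a self-contained reconstruction rather than a parallel of the paper's proof. Your route --- take the DPPP convergence of \cite{ABBS13,ABK13} quoted in the introduction together with Bramson's convergence of $M_t-m_t$, feed the pair into the equivalence (i)$\Leftrightarrow$(iii) of Lemma \ref{lemma:weak_convergence} to upgrade vague convergence to convergence of Laplace functionals on $\mathcal{T}$, and then evaluate the limiting Laplace functional by conditioning on $Z_\infty$ and applying the Poisson exponential formula to the marked process --- is essentially how the cited result is obtained in the literature, and all the delicate points are correctly identified: the verification that $\e_\infty((0,\infty))<\infty$ a.s.\ (using that the decoration atoms lie in $(-\infty,0]$), the finiteness of $\int(1-G(z))\,\mu(\d z)$ coming from the left-bounded support of $\varphi$, and the fact that vague convergence alone does not control $\langle\e_t^{\mathrm{cl}},\varphi\rangle$ for non-compactly-supported $\varphi$. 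The one step you pass over quickly is the identification of the limit law of $M_t-m_t$ with the law of $\max\e_\infty$: Bramson gives convergence to the travelling wave $w$, and one must invoke the Lalley--Sellke representation \eqref{travelling} together with the nonpositivity of the decoration atoms to see that $w$ is indeed the distribution function of $\max\e_\infty$. That is a short additional remark, not a gap in substance. In short, your proposal supplies a proof where the paper supplies only a reference, at the modest cost of these auxiliary verifications.
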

	\begin{remark}
		For simplicity, we put
		\begin{equation}\label{def_Cvarphi}
			C(\varphi) = C_* \int \left(1-\E (e^{-\langle \D^{\sqrt{2}},\varphi(\cdot+z) \rangle })\right) \sqrt{2}e^{-\sqrt{2}z} \d z.
		\end{equation}
		By Lemma \ref{lemma:e_t_Laplace}, a simple calculation using a change of variables yields that
		\begin{equation}\label{e_t+y_Laplace}
			\lim_{t\rightarrow\infty} \E[e^{-\sum_{u\in N_t} \varphi(y+X_u(t)-m_t)}] = \E\left[ \exp\left\{ -C(\varphi) Z_{\infty} e^{\sqrt{2}y} \right\}  \right].
		\end{equation}
	\end{remark}

	\section{Proof of Theorem \ref{thrm1}}\label{Sec3}
	In this section, we fix $x>0$ and $\rho<\sqrt{2}$. 
	Recall that $\N_t$ is the set of particles 
	that are alive at time $t$ and has not been absorbed by the line $\{(y,s): y = \rho s\}$ up to time $t$. To prove Theorem \ref{thrm1}, we first prove the following two lemmas.

	\begin{lemma}\label{lemma:mart_W}
		Define
		\begin{equation}\label{def_W_tilde}		
			\widetilde{W}_t := \sum_{u\in \widetilde{N}_t}  e^{\sqrt{2} (X_u(t)-\sqrt{2}t)},
		\end{equation}
		then $\{\widetilde{W}_t, \P_x \}$ is a non-negative supermartingale. Moreover, the limit $\widetilde{W}_\infty := \lim_{t\rightarrow\infty} \widetilde{W}_t$ exists and equals to zero $\P_x$-almost surely.
	\end{lemma}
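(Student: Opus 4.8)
The plan is to realise $\widetilde{W}_t$ as a sub-sum of the critical additive martingale $W_t = \sum_{u\in N_t} e^{\sqrt{2}X_u(t)-2t}$ of the full (non-absorbed) BBM. Since the weight attached to a particle $u$ is $e^{\sqrt{2}(X_u(t)-\sqrt{2}t)} = e^{\sqrt{2}X_u(t)-2t}$, the two processes carry identical summands, and because $\widetilde{N}_t\subseteq N_t$ with all summands non-negative, one reads off at once that $\widetilde{W}_t\ge 0$ and that the pointwise domination $\widetilde{W}_t\le W_t$ holds for every $t$. The process $\widetilde{W}_t$ is clearly $\F_t$-adapted, since membership in $\widetilde{N}_t$ is determined by the trajectories up to time $t$.

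For the supermartingale property I would fix $s\le t$ and condition on $\F_s$. The starting observation is that every $v\in\widetilde{N}_t$ stays strictly above the barrier on all of $[0,t]$, so in particular its ancestor at time $s$ lies in $\widetilde{N}_s$; this lets me regroup $\widetilde{W}_t$ as a sum over $u\in\widetilde{N}_s$ of the contributions of exactly those descendants of $u$ that remain above the barrier on $[s,t]$. By the branching Markov property the descendants of a fixed $u\in\widetilde{N}_s$ form an independent BBM issued from $X_u(s)$ at time $s$, and summing the weights $e^{\sqrt{2}X_w(t)-2t}$ over all such descendants $w$ has conditional expectation exactly $e^{\sqrt{2}X_u(s)-2s}$, this being the value at time $s$ of the additive martingale of that sub-BBM. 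Restricting the inner sum to the non-absorbed descendants only discards non-negative terms, so each block has conditional expectation at most $e^{\sqrt{2}X_u(s)-2s}$; summing over $u\in\widetilde{N}_s$ yields $\E_x[\widetilde{W}_t\mid\F_s]\le\widetilde{W}_s$, which is the desired inequality.

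With the supermartingale property established, the $\P_x$-a.s.\ existence of $\widetilde{W}_\infty$ follows from the martingale convergence theorem for non-negative supermartingales. To identify the limit I would invoke the domination $0\le\widetilde{W}_t\le W_t$ together with Lemma \ref{lemma:add_deri}, which gives $W_t\to W_\infty=0$ $\P_x$-almost surely; a squeeze then forces $\widetilde{W}_\infty=0$ almost surely. In fact the squeeze already delivers both the existence and the value of the limit simultaneously, so the supermartingale convergence theorem is needed only to keep the statement self-contained.

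I anticipate that the one genuinely delicate point is the bookkeeping in the supermartingale step: I must check that the regrouping of $\widetilde{N}_t$ by time-$s$ ancestors is both exhaustive and disjoint, and that the conditional-expectation identity for the \emph{unrestricted} additive martingale of each sub-BBM is applied with the correct time shift and correct starting value $e^{\sqrt{2}X_u(s)-2s}$. Everything else --- non-negativity, adaptedness, the domination, and the identification of the limit --- is essentially immediate once Lemma \ref{lemma:add_deri} is available.
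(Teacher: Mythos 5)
Your argument is correct and follows essentially the same route as the paper: the paper simply names your intermediate object explicitly as $\widetilde{W}_t^s$ (the sum over all time-$t$ descendants of particles in $\widetilde{N}_s$), computes $\E_x[\widetilde{W}_t^s\mid\F_s]=\widetilde{W}_s$ via the additive martingale of each sub-BBM, and uses $\widetilde{W}_t\le\widetilde{W}_t^s$ exactly as you do. The identification of the limit via $0\le\widetilde{W}_t\le W_t$ and $W_\infty=0$ is also identical.
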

	\begin{proof}
		For any $s<t$, define
		\begin{equation}\label{def_W^s_tilde}
			\widetilde{W}_t^s := \sum_{v\in \widetilde{N}_s} \sum_{u>v, u\in N_t}  e^{\sqrt{2}(X_u(t)-\sqrt{2}t)},
		\end{equation}
		where the notation $u>v$ means that $u$ is a descendant of $v$. 
		Notice that the set $\N^s_t := \{u\in N_t: \exists v\in \N_s \mbox{ s.t. } u>v \}$ contains all the particles alive at time $t$, which do not hit the line segment $\{(y,r): y = \rho r, 0\leq r\leq s\}$. Hence, $\N_t \subset \N_t^s \subset N_t$ and  $\widetilde{W}_t \leq \widetilde{W}_t^s \leq W_t$. Since $\{W_t, \P_y\}$ is a martingale, we have $\E_y W_t = e^{\sqrt{2} y}$ for any $y\in R$. By the branching property, 
		\begin{align}
			\E_x\left[ \widetilde{W}_t^s \big{|} \F_s \right] 
			&=  \sum_{v\in \widetilde{N}_s} e^{-2s} \E_x\left[ \sum_{u>v, u\in N_t}  e^{\sqrt{2}(X_u(t)-\sqrt{2}(t-s))} \Big{|} \F_s \right]\\
			&= \sum_{v\in \widetilde{N}_s} e^{-2s} \E_{X_v(s)} W_{t-s}(v)\\
			&= \sum_{v\in \widetilde{N}_s} e^{-2s} e^{\sqrt{2} X_v(s)} = \widetilde{W}_s,
		\end{align}
		where given $\F_s$, $\{W_{t-s}(v), v\in\N_s \}$ are independent copies of $W_{t-s}$. Therefore,
		\begin{equation}
			\E_x\left[ \widetilde{W}_t \big{|} \F_s \right] \leq \E_x\left[ \widetilde{W}_t^s \big{|} \F_s \right] = \widetilde{W}_s,
		\end{equation}
		which implies that $\{\widetilde{W}_t, \P_x \}$ is a non-negative supermartingale. Hence the limit $\widetilde{W}_\infty = \lim_{t\rightarrow\infty} \widetilde{W}_t$ exists $\P_x$-almost surely. Since $\widetilde{W}_t\leq W_t$, it follows from Lemma  \ref{lemma:add_deri} that $\widetilde{W}_\infty = 0$ $\P_x$-almost surely. This gives the desired result.
	\end{proof}
	
	Using a similar proof method to Lemma \ref{lemma:mart_W}, we will now prove the convergence of $\Z_t$. Since $\Z_t$ can be negative, the proof of Lemma \ref{lemma:mart_W} is not applicable to $\Z_t$. Therefore, for any $z > x$, we define the following non-negative process:
	\begin{equation}\label{def_V_tilde}
		\V_t^z := \sum_{u\in \N_t \cap \widehat{N}_t^z} (z+\sqrt{2}t-X_u(t)) e^{\sqrt{2}(X_u(t) -\sqrt{2}t)}.
	\end{equation}
	Then we will prove the convergence of $\V_t^z$ using the proof method of Lemma 3.1 and further use $\V_t^z$ to approximate $\Z_t$. Now we have the following lemma.
	\begin{lemma}\label{lemma:mart_V}
		For any $z>x$, $\{\V_t^z, \P_x \}$ is a non-negative supermartingale. The limit $\V_{\infty}^z := \lim_{t\rightarrow\infty} \V_t^z$ exists $\P_x$-almost surely. Moreover, $\V_{\infty}^z$ is non-degenerate.
	\end{lemma}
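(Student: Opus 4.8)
The plan is to treat the three assertions in turn, obtaining the supermartingale property and the almost sure convergence by adapting the argument of Lemma \ref{lemma:mart_W}, and devoting the main effort to non-degeneracy.

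For the supermartingale property I would, for $s<t$, introduce the intermediate process
\[
\V_t^{z,s} := \sum_{v\in\N_s\cap\widehat N_s^z}\ \sum_{\substack{u>v,\,u\in N_t\\ X_u(r)<z+\sqrt2 r\ \forall r\in[s,t]}} (z+\sqrt2 t - X_u(t))\,e^{\sqrt2(X_u(t)-\sqrt2 t)},
\]
which keeps the upper barrier $\widehat N^z$ throughout but only imposes the lower barrier up to time $s$. Every summand is non-negative because $u\in\widehat N_t^z$ forces $z+\sqrt2 t - X_u(t)>0$, and one checks the sandwich $\V_t^z\le\V_t^{z,s}\le V_t^z$, with $V_t^z$ the martingale of \eqref{def_mart_V}. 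Conditioning on $\F_s$ and using the branching property, the descendants of a fixed $v\in\N_s\cap\widehat N_s^z$ with $X_v(s)=y$ form a copy of Kyprianou's martingale with parameter $z+\sqrt2 s$ started from $y<z+\sqrt2 s$; since that martingale has mean $(z+\sqrt2 s-y)e^{\sqrt2 y}$, a short computation parallel to Lemma \ref{lemma:mart_W} yields $\E_x[\V_t^{z,s}\mid\F_s]=\V_s^z$, hence $\E_x[\V_t^z\mid\F_s]\le\V_s^z$. As the process is non-negative, the supermartingale convergence theorem gives the almost sure limit $\V_\infty^z$.

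For non-degeneracy I would first upgrade the convergence to $L^1$: since $0\le\V_t^z\le V_t^z$ and $\{V_t^z\}$ is uniformly integrable (it is an $L^1$-limit by \cite[Theorem 13]{Kyprianou04}), the dominated family $\{\V_t^z\}$ is uniformly integrable as well, so $\V_t^z\to\V_\infty^z$ in $L^1$ and $\E_x\V_\infty^z=\lim_{t\to\infty}\E_x\V_t^z$. It therefore suffices to show this limit is strictly positive. By the many-to-one formula \eqref{classical_many-to-one}, writing $e^t e^{\sqrt2 B_t-2t}=e^{\sqrt2 B_t-t}$ and performing the Girsanov tilt by $e^{\sqrt2(B_t-x)-t}$ (under which $B$ acquires drift $\sqrt2$), the substitution $R_s:=z+\sqrt2 s-B_s$ turns $B$ into a standard Brownian motion $R$ started at $z-x>0$, the strip constraint $\rho s<B_s<z+\sqrt2 s$ into $0<R_s<z+(\sqrt2-\rho)s$, and the weight into $R_t$. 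Thus $\E_x\V_t^z=e^{\sqrt2 x}\,\mathbb E_{z-x}[R_t;\,0<R_s<z+(\sqrt2-\rho)s\ \forall s\le t]$. A Doob $h$-transform with $h(y)=y$, under which Brownian motion killed at $0$ becomes a three-dimensional Bessel process $\xi$, converts this into $e^{\sqrt2 x}(z-x)\,\mathbb P^{(3)}_{z-x}\big(\xi_s<z+(\sqrt2-\rho)s\ \forall s\le t\big)$, which decreases as $t\to\infty$ to $e^{\sqrt2 x}(z-x)\,\mathbb P^{(3)}_{z-x}\big(\xi_s<z+(\sqrt2-\rho)s\ \forall s\ge0\big)$.

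The hard part is the strict positivity of this last probability, which is the genuinely new ingredient (the supermartingale bookkeeping being essentially that of Lemma \ref{lemma:mart_W}). Here I would use that $\rho<\sqrt2$, so the line $z+(\sqrt2-\rho)s$ has strictly positive slope, together with the sub-linear growth of the Bessel process ($\xi_s=o(s)$ almost surely). Consequently $\xi_s-(\sqrt2-\rho)s\to-\infty$ almost surely, the running supremum $\sup_{s\ge0}\big(\xi_s-(\sqrt2-\rho)s\big)$ is almost surely finite, and the event that $\xi$ never crosses the line has positive probability; the initial gap $z-(z-x)=x>0$ is exactly what makes $z>x$ the right hypothesis. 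This gives $\E_x\V_\infty^z=e^{\sqrt2 x}(z-x)\,\mathbb P^{(3)}_{z-x}(\forall s:\xi_s<z+(\sqrt2-\rho)s)>0$, so $\P_x(\V_\infty^z>0)>0$ and $\V_\infty^z$ is non-degenerate. The two points I would treat with care are the identification of the shifted descendant martingale in the conditional-expectation step and the rigorous justification of the Bessel estimate.
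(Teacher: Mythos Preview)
Your argument is correct and follows the paper's route closely: the supermartingale step via the intermediate process $\V_t^{z,s}$ is identical, and both proofs reduce non-degeneracy to showing that a Bessel-$3$ process started at $z-x$ stays below the line $s\mapsto z+(\sqrt2-\rho)s$ for all $s\ge0$ with positive probability. The packaging differs slightly---you invoke uniform integrability of $\{\V_t^z\}$ (dominated by the UI martingale $V_t^z$) and compute $\E_x\V_\infty^z=\lim_t\E_x\V_t^z$ directly, whereas the paper introduces the complementary submartingale $U_t^z:=V_t^z-\V_t^z$, bounds $\E_xU_\infty^z$ via Fatou, and subtracts from $\E_xV_\infty^z=(z-x)e^{\sqrt2 x}$---but both computations land on exactly the same Bessel expression.

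The one step that is genuinely incomplete in your sketch is the last one. From $\sup_{s\ge0}\bigl(\xi_s-(\sqrt2-\rho)s\bigr)<\infty$ a.s.\ you cannot conclude $\P\bigl(\sup<z\bigr)>0$ for the \emph{specific} threshold $z$ without further argument: an a.s.\ finite random variable may well be supported on $[z,\infty)$. The initial gap $x>0$ is certainly necessary, but by itself does not close the implication. The paper handles this point quantitatively: it represents the Bessel-$3$ process as the Euclidean norm of a three-dimensional Brownian motion $(B^1,B^2,B^3)$, passes to the subevent $\bigcap_i\{\forall s:\sqrt3\,|B^i_s|<z+(\sqrt2-\rho)s\}$, and then combines the explicit one-sided hitting formula $\Pi(\forall s:B_s<y+\mu s)=1-e^{-2y\mu}$ with a small-ball estimate on an initial interval $[0,s_0]$ to obtain a strictly positive lower bound. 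Your sublinear-growth heuristic can be completed along similar lines (for example, a support/small-ball argument for the Bessel diffusion on a finite time interval, followed by the Markov property and your observation that the process is eventually below any line of positive slope), but as written this is the step that still requires an extra ingredient---and you were right to flag it.
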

	\begin{proof}
		First, we show that $\{\V_t^z, \P_x \}$ is a non-negative supermartingale. Recall that $\widehat{N}_t^z$ is defined by \eqref{def_N_hat}. For $s<t$, define
		\begin{equation}\label{def_V_tilde^s}
			\V_t^{z,s} := \sum_{v\in \N_s \cap \widehat{N}_s^z} \sum_{u>v, u\in \widehat{N}_t^z} (z+\sqrt{2}t-X_u(t)) e^{\sqrt{2}(X_u(t) -\sqrt{2}t)}.
		\end{equation}
		By the branching property, we have
		\begin{align}
			\E_x\left[ \V_t^{z,s} \big{|} \F_s \right] &=  \sum_{v\in \N_s \cap \widehat{N}_s^z} \E_x\left[ \sum_{u>v, u\in \widehat{N}_t^z} (z+\sqrt{2}t-X_u(t)) e^{\sqrt{2}(X_u(t) -\sqrt{2}t)} \Big{|} \F_s \right]\\
			&=  \sum_{v\in \N_s \cap \widehat{N}_s^z} \E_x\left[ \sum_{u>v, u\in \widehat{N}_t^z} (z+\sqrt{2}s+\sqrt{2}(t-s)-X_u(t)) e^{\sqrt{2}(X_u(t) -\sqrt{2}(t-s))-2s} \Big{|} \F_s \right]\\
			&= \sum_{v\in \N_s \cap \widehat{N}_s^z} e^{-2s} \E_{X_v(s)} V_{t-s}^{z+\sqrt{2}s}(v),
		\end{align}
		where given $\F_s$, $V_{t-s}^{z+\sqrt{2}s}(v)$ is the counterpart of $V_{t-s}^{z+\sqrt{2}s}$ for the BBM starting from $X_v(s)$. By \cite[Theorem 9]{Kyprianou04}, $\{V_t^z, \P_y\}$ is a martingale and $\E_y V_t^z= (z-y) e^{\sqrt{2}y}$ for any $y<z$. So we have
		\begin{equation}
			\E_x\left[ \V_t^{z,s} \big{|} \F_s \right] = \sum_{v\in \N_s \cap \widehat{N}_s^z} e^{-2s} (z+\sqrt{2}s-X_v(s)) e^{\sqrt{2}X_v(s)} = \V_s^z
		\end{equation}
		and 
		\begin{equation}
			\E_x\left[ \V_t^z \big{|} \F_s \right] \leq \E_x\left[ \V_t^{z,s} \big{|} \F_s \right] = \V_s^z.
		\end{equation}
		Thus, $\{\V_t^z, \P_x \}$ is a non-negative supermartingale and must converge $\P_x$-almost surely.
		
		Next we will prove that the limit $\V_{\infty}^z$ is non-degenerate. Note that $\V_t^z \leq V_t^z$, $\{\V_t^z, \P_x \}$ is a supermartingale and $\{V_t^z, \P_x \}$ is a martingale. Define
		\begin{equation}\label{def_U}
			U_t^z := V_t^z - \V_t^z.
		\end{equation}
		Therefore,
		\begin{equation}
			\E_x[U_t^z | \F_s] = \E_x[V_t^z - \V_t^z | \F_s] \geq V_s^z - \V_s^z = U_s^z.
		\end{equation}
		This implies that $\{U_t^z, \P_x\}$ is a non-negative submartingale. Moreover, the limit
		\begin{equation}\label{eq_U_V_relation}
			\lim_{t\rightarrow\infty} U_t^z = \lim_{t\rightarrow\infty} (V_t^z - \V_t^z) = V_{\infty}^z - \V_{\infty}^z
		\end{equation}
		exists. Let $U_{\infty}^z := \lim_{t\rightarrow\infty} U_t^z$. By \cite[Theorem 13]{Kyprianou04}, $V_{\infty}^z$ is an $L^1(\P_x)$-limit and hence $\E_x V_{\infty}^z = \E_x V_0^z = (z-x)e^{\sqrt{2}x}$. Therefore, to show that $\V_{\infty}^z$ is non-degenerate, it is sufficient to prove that $\E_x U_{\infty}^z < (z-x)e^{\sqrt{2}x}$.
		By the definition \eqref{def_mart_V} and \eqref{def_V_tilde}, we obtain that
		\begin{align}
			\E_x U_t^z = \E_x \left[\sum_{u\in \widehat{N}_t^z,\, u\notin \N_t} (z+\sqrt{2}t-X_u(t)) e^{\sqrt{2}(X_u(t)-\sqrt{2}t)}\right].
		\end{align}
		For any $a,b\in \R$, we define the following two stopping times with respect to Brownian motion:
		\begin{align}
			&\overline{\tau}_{a}^{b} := \inf\{s\geq 0: B_s \geq a+bs \},\\
			&\underline{\tau}_{a}^{b} := \inf\{s\geq 0: B_s \leq a+b s \}. \label{def_tau_rho}
		\end{align}
		Then by the many-to-one formula \eqref{classical_many-to-one}, we have 
		\begin{align}
			\E_x U_t^z &= e^t \Pi_x \left[(z+\sqrt{2}t-B_t) e^{\sqrt{2}(B_t-\sqrt{2}t)} \1_{\{\overline{\tau}_z^{\sqrt{2}}>t, \, \underline{\tau}_{0}^{\rho}\leq t\}} \right]\\
			&= \Pi_x \left[e^{\sqrt{2}(B_t-x)-t} (z+\sqrt{2}t-B_t) e^{\sqrt{2}x}  \1_{\{\overline{\tau}_z^{\sqrt{2}}>t, \, \underline{\tau}_{0}^{\rho}\leq t\}} \right]\\
			&= \Pi_x^{\sqrt{2}} \left[ (z+\sqrt{2}t-B_t) e^{\sqrt{2}x} \1_{\{\overline{\tau}_z^{\sqrt{2}}>t, \, \underline{\tau}_{0}^{\rho}\leq t\}} \right],
		\end{align}
		where the last equality follows from Girsanov's theorem and $\{B_t, \Pi_x^{\sqrt{2}} \}$ is a Brownian motion with drift $\sqrt{2}$ starting from $x$. Let $\widehat{B}_t = \sqrt{2}t - (B_t-x)$, then $\{\widehat{B}_t, \Pi_x^{\sqrt{2}} \}$ is a standard Brownian motion. 
		For any $a,b\in \R$, we define the following two stopping times with respect to $\widehat{B}$:
		\begin{align}
			&\overline{\sigma}_{a}^{b} := \inf\{s\geq 0: \widehat{B}_s \geq a+bs \},\\
			&\underline{\sigma}_{a}^{b} := \inf\{s\geq 0: \widehat{B}_s \leq a+b s \}. 
		\end{align}		
		Therefore,
		\begin{equation}\label{eq_EU_t}
			\E_x U_t^z = e^{\sqrt{2}x} \Pi_x^{\sqrt{2}} \left[ (z-x+\widehat{B}_t) \1_{\{ \underline{\sigma}_{-(z-x)}^{0} > t, \, \overline{\sigma}_x^{\sqrt{2}-\rho}\leq t \}} \right].
		\end{equation}
		Define 
		\begin{equation}\label{eq_meas_change_Bessel}
			\frac{\d \Pi_x^{(\sqrt{2},z)}}{\d \Pi_x^{\sqrt{2}}}\bigg{|}_{\F_t^{\widehat{B}}} = \frac{z-x+\widehat{B}_t}{z-x} \1_{\{ \underline{\sigma}_{-(z-x)}^{0} > t\}},
		\end{equation}
		where $\F_t^{\widehat{B}}$ is the natural filtration of Brownian motion $\{\widehat{B}_t, \Pi_x^{\sqrt{2}} \}$. According to \cite{Imhof84}, $\{z-x+\widehat{B}_t, \Pi_x^{(\sqrt{2},z)}\}$ is a standard Bessel-3 process starting from $z-x$. 
		Combing \eqref{eq_EU_t} and \eqref{eq_meas_change_Bessel}, we obtain that
		\begin{align}
			(z-x)e^{\sqrt{2}x}-\E_x U_t^z &= (z-x)e^{\sqrt{2}x} \left(1- \Pi_x^{(\sqrt{2},z)} \1_{\{\overline{\sigma}_x^{\sqrt{2}-\rho}\leq t \}} \right)\\
			&= (z-x)e^{\sqrt{2}x} \Pi_x^{(\sqrt{2},z)} (\forall s\leq t, z-x+\widehat{B}_s < z + (\sqrt{2}-\rho)s ).
		\end{align} 
		Letting $t\rightarrow\infty$, we have 
		\begin{equation}\label{z-EU^z}
			(z-x)e^{\sqrt{2}x}-\lim_{t\rightarrow\infty} \E_x U_t^z = (z-x)e^{\sqrt{2}x} \Pi_x^{(\sqrt{2},z)} (\forall s\geq 0, z-x+\widehat{B}_s < z + (\sqrt{2}-\rho)s )
		\end{equation}
		Let $\{(B_t^1, B_t^2, B_t^3), \Pi_x^{(\sqrt{2},z)} \}$ be three independent Brownian motions starting at\\ $(\frac{z-x}{\sqrt{3}},\frac{z-x}{\sqrt{3}},\frac{z-x}{\sqrt{3}})$, then it is easy to verify that
		\begin{equation}
			\left\{(z-x+\widehat{B}_t)^2, \Pi_x^{(\sqrt{2},z)} \right\} \overset{d}{=} \left\{ (B_t^1)^2 + (B_t^2)^2 + (B_t^3)^2, \Pi_x^{(\sqrt{2},z)} \right\}.
		\end{equation}
		Thus,
		\begin{align}
			&\Pi_x^{(\sqrt{2},z)} \left(\forall s\geq 0, z-x+\widehat{B}_s < z + (\sqrt{2}-\rho)s \right)\\ 
			\geq& \, \Pi_x^{(\sqrt{2},z)} \left(\forall s\geq 0, \sqrt{3} |B_s^i| < z + (\sqrt{2}-\rho)s, \mbox{ for } i=1,2,3 \right)\\
			= &\, \Pi_x^{(\sqrt{2},z)} \left(\forall s\geq 0, \sqrt{3} |B_s^1| < z + (\sqrt{2}-\rho)s\right)^3. \label{Bessel_BM_estimate}
		\end{align}
		By \cite[Section 3.5.C]{Karatzas}, we know for $y,\mu>0$,
		\begin{equation}
			\Pi (\forall s\geq 0: B_s < y + \mu s) = 1 - e^{-2y\mu}.
		\end{equation}
		Therefore, for standard Brownian motion $\{B_t, \Pi\}$,
		\begin{equation}
			\Pi (\forall s\geq 0: |B_s| < y + \mu s) \geq 1 - 2 \Pi (\exists s\geq 0: B_s \geq y + \mu s) = 1 - 2e^{-2y\mu}
		\end{equation}
		and hence for fixed $\mu>0$ and $y$ large enough, we have 
		$$\Pi (\forall s\geq 0: |B_s| < y + \mu s)>0.$$
		By the Markov property of Brownian motion,
		\begin{align}
			&\Pi_x^{(\sqrt{2},z)} \left(\forall s\geq 0, \sqrt{3} |B_s^1| < z + (\sqrt{2}-\rho)s\right) \geq \Pi\left(\forall s\geq 0,  |B_s| < \frac{x}{\sqrt{3}} + \frac{\sqrt{2}-\rho}{\sqrt{3}}s\right)\\
			&\geq \Pi\left( \1_{\{\max_{s\leq s_0} |B_s| < \frac{x}{\sqrt{3}}\}} \Pi_{B_{s_0}} \left(\forall s\geq s_0, |B_s| < \frac{x}{\sqrt{3}} + \frac{\sqrt{2}-\rho}{\sqrt{3}}s_0 + \frac{\sqrt{2}-\rho}{\sqrt{3}}(s-s_0) \right)\right)\\
			&\geq \Pi\left( \max_{s\leq s_0} |B_s| < \frac{x}{\sqrt{3}}\right) \Pi\left(\forall s\geq 0, |B_s| < \frac{\sqrt{2}-\rho}{\sqrt{3}}s_0 + \frac{\sqrt{2}-\rho}{\sqrt{3}}s \right).
		\end{align}
		Choose $s_0$ large enough that $1-e^{-2s_0(\frac{\sqrt{2}-\rho}{\sqrt{3}})^2} > \frac{1}{2}$. By \cite[Lemma 3.1]{Stroock72}, there exists $\delta>0$ such that $\Pi\left( \max_{s\leq s_0} |B_s| < \frac{x}{\sqrt{3}}\right) \geq \delta$. Combining this with \eqref{z-EU^z} and \eqref{Bessel_BM_estimate}, we get that
		\begin{equation}
			(z-x)e^{\sqrt{2}x} - \lim_{t\rightarrow\infty} \E_x U_t^z \geq (z-x)e^{\sqrt{2}x} (\frac{\delta}{2})^3 > 0.
		\end{equation}
		By Fatou's lemma, 
		\begin{equation}
			\E_x U^z_{\infty} \leq \liminf_{t\rightarrow\infty} \E_x U_t^z < (z-x)e^{\sqrt{2}x}.
		\end{equation}
		This, combined  with \eqref{eq_U_V_relation}, gives that $\E_x \V_{\infty}^z = \E_x V_{\infty}^z - \E_x U^z_{\infty} >0$. Since $\V_{\infty}^z$ is non-negative, we know that $\V_{\infty}^z$ is non-degenerate. This completes the proof.
	\end{proof}
	
	Recall that $\Z_t$, $\gamma^{(z,\sqrt{2})}$, $\widetilde{W}_t$ and $\V_t^z$ are defined by \eqref{def_Z},  \eqref{def_gamma}, \eqref{def_W_tilde} and \eqref{def_V_tilde}, respectively.
	Next, we will give the proof of Theorem \ref{thrm1}. Based on the argument in Section \ref{Sec2}, it is equivalent to proving Theorem \ref{thrm1'}. 
	
	\begin{proof}[Proof of Theorem 1.1]
		Notice that on $\gamma^{(z,\sqrt{2})}$, $N_t = \widehat{N}_t^z$ for any $t>0$. Hence, we have on $\gamma^{(z,\sqrt{2})}$,
		\begin{equation}
			\V_t^z = z\widetilde{W}_t + \Z_t.
		\end{equation}
		Letting $t\rightarrow\infty$, it yields on $\gamma^{(z,\sqrt{2})}$ the limit $\lim_{t\rightarrow\infty}(z\widetilde{W}_t + \Z_t)$ exists and equals to $\V_{\infty}^z$. It follows from Lemma \ref{lemma:mart_W} that $\widetilde{W}_t = 0$ $\P_x$-almost surely. Therefore, we get that on $\gamma^{(z,\sqrt{2})}$,
		\begin{equation}\label{Z_equal_V}
			\lim_{t\rightarrow\infty} \Z_t = \V_{\infty}^z.
		\end{equation}
		Since $\P(\gamma^{(z,\sqrt{2})}) \uparrow 1$ as $z\uparrow \infty$, we know that $\lim_{t\rightarrow\infty} \Z_t$ exists $\P_x$-almost surely and we use $\Z_{\infty}$ to denote this limit.
		
		Next we will prove that $\Z_{\infty}$ is non-degenerate, that is, $\P_x(\Z_{\infty} = 0) < 1$. Notice that for $z_1\leq z_2$, $\widehat{N}_t^{z_1} \subset \widehat{N}_t^{z_2}$, so we have that
		\begin{align}
			\V_t^{z_1} &= \sum_{u\in \N_t \cap \widehat{N}_t^{z_1}} (z_1+\sqrt{2}t-X_u(t)) e^{\sqrt{2}(X_u(t) -\sqrt{2}t)}\\ &\leq \sum_{u\in \N_t \cap \widehat{N}_t^{z_2}} (z_2+\sqrt{2}t-X_u(t)) e^{\sqrt{2}(X_u(t) -\sqrt{2}t)} = \V_t^{z_2}.
		\end{align}
		Letting $t\rightarrow\infty$, this implies $\V_{\infty}^{z_1} \leq \V_{\infty}^{z_2}$ $\P_x$-almost surely. Combining this with \eqref{Z_equal_V} and $\gamma^{(z_1,\sqrt{2})} \subset \gamma^{(z_2,\sqrt{2})}$, we get that $\V_{\infty}^z \leq \Z_{\infty}$ for any $z>0$. By Lemma \ref{lemma:mart_V}, we have $\P_x(\V_{\infty}^z = 0) < 1$ for $z>x$ and hence  $\Z_{\infty}$ is non-degenerate.
		It remains to prove that $\{\Z_{\infty}>0 \}$ and $\{\zeta = \infty \}$ are equivalent up to a $\P_x$-null set.
		Define the function
		$$g(x):=\P_x(\zeta<\infty).$$
		It follows from \cite{Harris06} that $g$ is the unique travelling wave to
		\begin{equation}\label{ODE}
			\left \{
			\begin{aligned}
				& \frac{1}{2}g''-\rho g'+ f(g)-g=0,\quad x>0, \\
				& g(0+)=1,\quad g(\infty)=0.\\
			\end{aligned}\right.
		\end{equation}
		According to the definition of $\Z_t$, it is easy to verify that
		$\{\zeta<\infty\}\subset\{\Z_{\infty}=0\}$. Thus,
		\begin{equation*}
			\P_x(\Z_{\infty}=0)=\P_x(\zeta<\infty)+\P_x(\Z_{\infty}=0;\zeta=\infty).
		\end{equation*}
		So it suffices to show that $\P_x(\Z_{\infty}=0)=\P_x(\zeta<\infty)$. Define $h(x):=\P_x(\Z_{\infty}=0)$. Hence, $h(x)$ satisfies the boundary condition $\lim_{x\to 0^+}h(x)=1$. Since 
		\begin{equation*}
			\begin{aligned}
				\Z_t
				&=\sum_{v\in \N_s}\sum_{u\in \N_t,u>v}\left(\sqrt{2}t-X_u(t)\right)e^{\sqrt{2}(X_u(t)-\sqrt{2}t)}\\
				&= \sum_{v\in\N_s} e^{\sqrt{2}(X_v(s)-\sqrt{2}s)} \sum_{u>v,u\in \N_t} (\sqrt{2}(t-s)-(X_u(t)-X_v(s)))e^{\sqrt{2}((X_u(t)-X_v(s))-\sqrt{2}(t-s))}\\
				&\quad + \sum_{v\in\N_s} e^{\sqrt{2}(X_v(s)-\sqrt{2}s)} \sum_{u>v,u\in \N_t} (\sqrt{2}s+X_v(s)) e^{\sqrt{2}((X_u(t)-X_v(s))-\sqrt{2}(t-s))}\\
				&\overset{d}{=} \sum_{v\in\N_s} e^{\sqrt{2}(X_v(s)-\sqrt{2}s)} \left(\Z_{t-s}(v,X_v(s)-\rho s) + (\sqrt{2}s+X_v(s)) \widetilde{W}_{t-s}(v,X_v(s)-\rho s) \right), 
			\end{aligned}
		\end{equation*}
		where given $\F_s$, $\{(\Z_{t-s}(v,X_v(s)-\rho s), \widetilde{W}_{t-s}(v,X_v(s)-\rho s)), v\in\N_s\}$ are independent copies of $(\Z_{t-s}, \widetilde{W}_{t-s})$ starting from $X_v(s)-\rho s$. By Lemma \ref{lemma:add_deri}, we know that $\lim_{t\rightarrow\infty} \widetilde{W}_{t-s} = 0$ almost surely.
		Letting $t\rightarrow\infty$, it follows that
		\begin{equation*}
			\Z_{\infty}\overset{d}{=}\sum_{v\in \N_s} e^{\sqrt{2}(X_v(s)-\sqrt{2}s)} \Z_{\infty}(v,X_v(s)-\rho s),
		\end{equation*}
		where given $\F_s$, $\{\Z_{\infty}(v,X_v(s)-\rho s), v\in\N_s\}$ are independent copies of $\{\Z_{\infty}, \P_{X_v(s)-\rho s}\}$. Therefore, 
		\begin{equation}\label{g_equation}
			h(x)=\P_x\left(\sum_{v\in \N_s}\Z_{\infty}(v,X_v(s)-\rho s)=0\right)
			=\E_x\left(\prod_{v\in\N_s} h(X_v(s)-\rho s)\right).
		\end{equation}
		It follows from \cite[Proof of Theorem 4]{Harris06} that
		\begin{equation*}
			\frac{1}{2}h''-\rho h'+ f(h)-h=0.
		\end{equation*}
		By \eqref{def_V_tilde} and \eqref{Z_equal_V}, it is easy to show that $h(x)$ is monotone decreasing in $x$. For fixed time $s>0$, by the definition \eqref{def_tilde_Nt}, we have
		\begin{align}
			h(x) &= \E_x \left(\prod_{v\in N_s} [h(X_v(s)-\rho s)]^{\mathbf{1}_{\{\forall r\leq s: X_v(r) \geq \rho r \}}} \right)\\
			&= \E \left(\prod_{v\in N_s} [h(x+X_v(s)-\rho s)]^{\mathbf{1}_{\{\forall r\leq s: x+X_v(r) \geq \rho r \}}} \right) \label{h_equation}
		\end{align}
		For any $v\in N_s$, $\mathbf{1}_{\{\forall r\leq s: x+X_v(r) \geq \rho r \}}\rightarrow 1$ as $x$ tends to infinity.  Letting $x\rightarrow\infty$ on the both side of \eqref{h_equation} and by the bounded convergence theorem, we have
		\begin{equation}
			h(\infty) = \E_x\left( \prod_{v\in N_s} h(\infty) \right).
		\end{equation}
		Hence $h(\infty) = 0$ or $1$. Since $\Z_{\infty}$ is non-degenerate, we have $h(\infty) = 0$. Therefore, $h(x)$ satisfies the equation \eqref{ODE} and by the uniqueness of the one-sided travelling wave, we have $\P_x(\Z_{\infty}=0)=\P_x(\zeta<\infty)$. This completes the proof.
	\end{proof}

	\section{Proof of Theorem \ref{thrm2}}\label{Sec4}
	For any $0<s<t$, define 
	\begin{equation}\label{def_e^s_t}
		\e^s_t := \sum_{v\in\N_s} \sum_{u>v, u\in N_t} \delta_{X_u(t)-m_t}.
	\end{equation}
	The point process $\e^s_t$ will play an important role in the proof of the convergence of $\e_t$. Define
	\begin{equation}\label{def_Z^s_t}
		\Z^s_t := \sum_{v\in\N_s} \sum_{u>v, u\in N_t} (\sqrt{2}t-X_u(t)) e^{\sqrt{2}(X_u(t)-\sqrt{2}t)}.
	\end{equation}
	Recall that
	\begin{equation}\label{def_N^s_t}
		\N^s_t = \{u\in N_t: \exists v\in \N_s, \mbox{ s.t. } u>v \},
	\end{equation}
	then we also have another representation for $\Z^s_t $:
	$$\Z^s_t = \sum_{u\in \N^s_t} (\sqrt{2}t-X_u(t)) e^{\sqrt{2}(X_u(t)-\sqrt{2}t)}.$$ 
	For $t\leq s$, define $\Z^s_t := \Z_t.$
	In the following lemma, we will show that the limit of $\Z^s_t$ as $t\to\infty$ exists $\P_x$-almost surely and is related to the limit of the Laplace functionals of $\e_t^s$.
	\begin{lemma}\label{lemma:e^s_t_Laplace}
		Suppose $x>0$, $\rho<\sqrt{2}$ and $\N_t$ is given by \eqref{def_tilde_Nt}. For any $s>0$,
		\begin{equation}
			\Z_{\infty}^s := \lim_{t\rightarrow\infty} \Z^s_t
		\end{equation}
		exists $\P_x$-almost surely. 	Furthermore, for any non-negative function $\varphi \in \mathcal{T}$, 
		\begin{equation}\label{eq_e^s_t_Laplace}
			\lim_{t\rightarrow\infty} \E_x e^{-\langle \e^s_t, \varphi \rangle} = \E_x\left[ \exp\left\{ -C_* \Z^s_{\infty} \int \left(1-\E (e^{-\langle \D^{\sqrt{2}},\varphi(\cdot+z) \rangle })\right) \sqrt{2}e^{-\sqrt{2}z} \d z \right\} \right].
		\end{equation} 	
	\end{lemma}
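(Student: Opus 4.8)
The plan is to reduce everything to the classical BBM results of Section~\ref{Sec2} by applying the branching property at the fixed time $s$. Conditionally on $\F_s$, the descendants of each particle $v\in\N_s$ form an independent classical (unabsorbed) BBM started from $X_v(s)$, so both $\Z_t^s$ and $\e_t^s$ split into a \emph{finite} superposition, indexed by $v\in\N_s$, of independent contributions to which Lemma~\ref{lemma:add_deri} and \eqref{e_t+y_Laplace} apply directly. Throughout I set $y_v:=X_v(s)-\sqrt2\,s$, which is $\F_s$-measurable, and use that $\N_s\subset N_s$ is $\P_x$-a.s.\ finite.

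For the existence of $\Z_\infty^s$ I would first rearrange, writing $X_u(t)=X_v(s)+(X_u(t)-X_v(s))$ for $u>v$ and peeling off the factor $e^{\sqrt2 y_v}$, to obtain
\[
\Z_t^s=\sum_{v\in\N_s}e^{\sqrt2 y_v}\Bigl(Z_{t-s}^{(v)}+(\sqrt2\,s-X_v(s))\,W_{t-s}^{(v)}\Bigr),
\]
where, given $\F_s$, the pairs $(Z_{t-s}^{(v)},W_{t-s}^{(v)})$ are independent copies of the classical derivative and additive martingales run for time $t-s$. Since $\N_s$ is finite, letting $t\to\infty$ and invoking Lemma~\ref{lemma:add_deri} ($Z_{t-s}^{(v)}\to Z_\infty^{(v)}$ and $W_{t-s}^{(v)}\to0$ $\P_x$-a.s.) gives the $\P_x$-a.s.\ limit
\[
\Z_\infty^s=\sum_{v\in\N_s}e^{\sqrt2 y_v}\,Z_\infty^{(v)} .
\]

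For the Laplace functional I would condition on $\F_s$ and factor over $v\in\N_s$:
\[
\E_x\bigl[e^{-\langle\e_t^s,\varphi\rangle}\mid\F_s\bigr]=\prod_{v\in\N_s}\E\Bigl[e^{-\sum_{w\in N_{t-s}}\varphi(y_v(t)+X_w(t-s)-m_{t-s})}\Bigr],
\]
where $y_v(t):=X_v(s)-(m_t-m_{t-s})=y_v+\tfrac{3}{2\sqrt2}\log\tfrac{t}{t-s}\to y_v$. Each factor is exactly a shifted extremal Laplace functional of classical BBM run for time $t-s\to\infty$, so (modulo the $t$-dependence of the shift, addressed below) \eqref{e_t+y_Laplace} identifies its limit as $\E[\exp\{-C(\varphi)Z_\infty^{(v)}e^{\sqrt2 y_v}\}]$, with $C(\varphi)$ as in \eqref{def_Cvarphi}. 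Taking the product over the finite set $\N_s$ and using the conditional independence of the $Z_\infty^{(v)}$ recovers $\E_x[\exp\{-C(\varphi)\Z_\infty^s\}\mid\F_s]$ via the representation of $\Z_\infty^s$ above; a final bounded-convergence step (each conditional functional lies in $[0,1]$) removes the conditioning and yields \eqref{eq_e^s_t_Laplace}, since the right-hand side there equals $\E_x[\exp\{-C(\varphi)\Z_\infty^s\}]$.

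The one genuinely delicate point—and the step I expect to require the most care—is that the shift $y_v(t)$ depends on $t$, whereas \eqref{e_t+y_Laplace} is stated only for a fixed shift, so one cannot simply substitute $y=y_v$. I would resolve this by a squeeze: set $\bar\varphi_\epsilon(\cdot):=\sup_{|h|\le\epsilon}\varphi(\cdot+h)$ and $\underline\varphi_\epsilon(\cdot):=\inf_{|h|\le\epsilon}\varphi(\cdot+h)$, which are again continuous, non-negative, bounded and left-supported, hence lie in $\mathcal T$. Because $0\le y_v(t)-y_v\to0$, for $t$ large one has $\underline\varphi_\epsilon(y_v+\cdot)\le\varphi(y_v(t)+\cdot)\le\bar\varphi_\epsilon(y_v+\cdot)$, so each factor is trapped between the corresponding functionals evaluated at the \emph{fixed} shift $y_v$; applying \eqref{e_t+y_Laplace} to $\bar\varphi_\epsilon$ and $\underline\varphi_\epsilon$, then sending $\epsilon\downarrow0$ (using $\bar\varphi_\epsilon\downarrow\varphi$, $\underline\varphi_\epsilon\uparrow\varphi$, and hence $C(\bar\varphi_\epsilon),C(\underline\varphi_\epsilon)\to C(\varphi)$ by dominated convergence in \eqref{def_Cvarphi}) pins down the limit of each factor. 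Everything else is routine bookkeeping with the branching property and the finiteness of $\N_s$.
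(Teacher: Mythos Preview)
Your proposal is correct and, for the Laplace-functional part, follows essentially the same route as the paper: condition on $\F_s$, factor over the finitely many particles $v\in\N_s$, apply \eqref{e_t+y_Laplace} to each factor, and use bounded convergence. Your squeeze argument with $\bar\varphi_\epsilon,\underline\varphi_\epsilon$ is a rigorous way to handle the $t$-dependent shift $y_v(t)\to y_v$; the paper simply notes that $\frac{3}{2\sqrt2}\log\frac{t}{t-s}\to0$ and invokes \eqref{e_t+y_Laplace} without further comment, so you are being more careful here than the original.

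Where you genuinely diverge is in the proof of the almost-sure existence of $\Z_\infty^s$. The paper does \emph{not} use your direct decomposition for this step; instead it reruns the machinery from the proof of Theorem~\ref{thrm1}, showing that $\{\V_t^{z,s},\,t\ge s\}$ (defined in \eqref{def_V_tilde^s}) is a non-negative martingale, hence converges, and then passing from $\V_\infty^{z,s}$ to $\Z_\infty^s$ on the event $\gamma^{(z,\sqrt2)}$. Only afterwards does the paper write down your decomposition, and then only to identify $\Z_\infty^s\overset{d}{=}\sum_{v\in\N_s}e^{\sqrt2 y_v}Z_\infty(v)$ in law. Your approach is more elementary: since $\N_s$ is a.s.\ finite and the decomposition $\Z_t^s=\sum_{v\in\N_s}e^{\sqrt2 y_v}(Z_{t-s}^{(v)}+(\sqrt2 s-X_v(s))W_{t-s}^{(v)})$ is a pathwise identity, Lemma~\ref{lemma:add_deri} applied term by term gives a.s.\ convergence immediately, together with the pathwise representation of the limit. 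This bypasses the $\V^{z,s}$ detour entirely. The paper's route, however, has the advantage that it sets up $\V_t^{z,s}$ as a martingale, a fact which is reused in the proof of Lemma~\ref{lemma:Z^s_converge}; your shortcut would force you to re-establish the needed monotonicity and integrability properties there by other means.
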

	\begin{proof}
		The proof of the existence of $\Z_{\infty}^s$ is similar to that of $\Z_{\infty}$. Recall the definition of $\V_t^{z,s}$  given in \eqref{def_V_tilde^s}. Using the similar argument in the proof of Theorem \ref{thrm1}, 
		we can see that if the limit of $\V_t^{z,s}$ exists as $t\rightarrow\infty$, then on $\gamma^{(z,\sqrt{2})}$,
		\begin{equation}\label{eq_Z^s_t_V}
			\lim_{t\rightarrow\infty} \Z^s_t = \lim_{t\rightarrow\infty} \V_t^{z,s}.
		\end{equation}
		For $s\leq t_1<t_2$, notice that
		\begin{equation}
			\V_{t_2}^{z,s} = \sum_{v\in\N^s_{t_1}\cap\widehat{N}_{t_1}^z} \sum_{u>v, u\in \widehat{N}_{t_2}^z} (z+\sqrt{2}t_2-X_u(t_2)) e^{\sqrt{2}(X_u(t_2) -\sqrt{2}t_2)}.
		\end{equation}
		By the branching property and $\E_y V_t^z = (z-y)e^{\sqrt{2}y}$, we get that
		\begin{align}
			&\E_x\left[ \V_{t_2}^{z,s} \big{|} \F_{t_1} \right] =  \sum_{v\in\N^s_{t_1}\cap\widehat{N}_{t_1}^z} \E_x\left[ \sum_{u>v, u\in \widehat{N}_{t_2}^z} (z+\sqrt{2}t_2-X_u(t_2)) e^{\sqrt{2}(X_u(t_2) -\sqrt{2}t_2)} \Big{|} \F_{t_1} \right]\\
			=&  \sum_{v\in\N^s_{t_1}\cap\widehat{N}_{t_1}^z} e^{-2t_1} \E_x\left[ \sum_{u>v, u\in \widehat{N}_{t_2}^z} (z+\sqrt{2}t_1+\sqrt{2}(t_2-t_1)-X_u(t_1)) e^{\sqrt{2}(X_u(t_2) -\sqrt{2}(t_2-t_1))} \Big{|} \F_{t_1} \right]\\
			=& \sum_{v\in \N_{t_1}^s \cap \widehat{N}_{t_1}^z} e^{-2t_1} \E_{X_v(t_1)} V_{t_2-t_1}^{z+\sqrt{2}t_1}(v)
			= \sum_{v\in \N_{t_1}^s \cap \widehat{N}_{t_1}^z} e^{-2t_1} (z+\sqrt{2}t_1-X_v(t_1)) e^{\sqrt{2}X_v(t_1)} = \V_{t_1}^{z,s},
		\end{align}
		where given $\F_s$, $V_{t_2-t_1}^{z+\sqrt{2}t_1}(v)$ is the counterpart of $V_{t_2-t_1}^{z+\sqrt{2}t_1}$ for the BBM starting from $X_v(t_1)$. Hence $\{\V_{t}^{z,s}, t\geq s, \P_x \}$ is a non-negative martingale and must converge to some limit, say $\V_{\infty}^{z,s}$. By \eqref{eq_gamma_to_1} and \eqref{eq_Z^s_t_V}, we obtain $\Z^s_{\infty} = \lim_{t\rightarrow\infty} \Z^s_t$ exists $\P_x$-almost surely.
		
		Now we will prove \eqref{eq_e^s_t_Laplace}. 
		By \eqref{def_e^s_t} and the branching property and Markov property,
		\begin{align}
			\E_x\left[ e^{-\langle \e^s_t, \varphi \rangle} |\F_s \right] =  \prod_{v\in\N_s} \E_{X_v(s)} \left[ e^{-\sum_{u\in N_{t-s}} \varphi(X_u(t-s)-m_t) } \right].
		\end{align}
		Notice that
		\begin{equation}
			X_u(t-s) - m_t = X_u(t-s) - m_{t-s} - \sqrt{2}s + \frac{3}{2\sqrt{2}}\log \frac{t}{t-s}
		\end{equation}
		and $\lim_{t\rightarrow\infty} \frac{3}{2\sqrt{2}}\log \frac{t}{t-s} = 0$, so by \eqref{e_t+y_Laplace} we get that given the starting point $X_v(s)$
		\begin{equation}
			\lim_{t\rightarrow\infty} \E_{X_v(s)} \left[ e^{-\sum_{u\in N_{t-s}} \varphi(X_u(t-s)-m_t) } \right] =  \E\left[ \exp\left\{ -C(\varphi) Z_{\infty}(v) e^{\sqrt{2}( X_v(s) - \sqrt{2}s)} \right\} | X_v(s) \right],
		\end{equation}
		where given $\F_s$, $\{Z_{\infty}(v),v\in\N_s\}$ are independent copies of $\{Z_{\infty},\P\}$.
		By the bounded convergence theorem, we have
		\begin{align}
			\lim_{t\rightarrow\infty} \E_x e^{-\langle \e^s_t, \varphi \rangle} &= \E_x \left[ \prod_{v\in\N_s} \lim_{t\rightarrow\infty} \E \left[ e^{-\sum_{u>v,u\in N_t} \varphi(X_u(t)-m_t) } | \F_s \right] \right]\\
			&= \E_x\left[ \prod_{v\in\N_s} \E\left[ \exp\left\{ -C(\varphi) Z_{\infty}(v) e^{\sqrt{2}( X_v(s) - \sqrt{2}s)} \right\} | X_v(s) \right] \right] \\
			&= \E_x\left[ \exp\left\{ -C(\varphi) \sum_{v\in\N_s} Z_{\infty}(v) e^{\sqrt{2}( X_v(s) - \sqrt{2}s)} \right\} \right],
		\end{align}
		Since
		\begin{align}
			\Z_t^s =& \sum_{v\in\N_s} e^{\sqrt{2}(X_v(s)-\sqrt{2}s)} \sum_{u>v,u\in N_t} (\sqrt{2}(t-s)-(X_u(t)-X_v(s)))e^{\sqrt{2}((X_u(t)-X_v(s))-\sqrt{2}(t-s))}\\
			&\,+ \sum_{v\in\N_s} e^{\sqrt{2}(X_v(s)-\sqrt{2}s)} \sum_{u>v,u\in N_t} (\sqrt{2}s+X_v(s)) e^{\sqrt{2}((X_u(t)-X_v(s))-\sqrt{2}(t-s))}\\
			\overset{d}{=}& \sum_{v\in\N_s} e^{\sqrt{2}(X_v(s)-\sqrt{2}s)} \left(Z_{t-s}(v) + (\sqrt{2}s+X_v(s)) W_{t-s}(v) \right),
		\end{align}
		where given $\F_s$, $\{(Z_{t-s}(v), W_{t-s}(v)), v\in\N_s\}$ are independent copies of $(Z_{t-s}, W_{t-s})$ for BBM starting from $0$. Letting $t\rightarrow\infty$, it follows from Lemma \ref{lemma:add_deri} that
		\begin{equation}
			\Z_{\infty}^s \overset{d}{=} \sum_{v\in\N_s} e^{\sqrt{2}(X_v(s)-\sqrt{2}s)} Z_{\infty}(v).
		\end{equation}
		Therefore,
		\begin{equation}
			\lim_{t\rightarrow\infty} \E_x e^{-\langle \e^s_t, \varphi \rangle} = \E_x\left[ e^ {-C(\varphi) \Z_{\infty}^s} \right].
		\end{equation}
		This completes the proof.
	\end{proof}
	
	In the following lemma, we show that the limit of $\Z^s_{\infty}$ as $s\to\infty$ exists and is equal to $\Z_{\infty}$. 
	\begin{lemma}\label{lemma:Z^s_converge}
		Suppose $x>0$ and $\rho<\sqrt{2}$. The limit $\lim_{s\rightarrow\infty} \Z^s_{\infty}$ exists $\P_x$-almost surely and is equal to $\Z_{\infty}$.
	\end{lemma}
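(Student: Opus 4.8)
The plan is to route the whole argument through the non-negative martingales $\V^{z,s}_t$ and $V^z_t$, on which monotonicity and uniform integrability are transparent, and only at the end pass back to $\Z$ on the events $\gamma^{(z,\sqrt{2})}$. This detour is forced on us because $\Z^s_\infty$ is built from the (signed) derivative-martingale weights and is neither monotone nor non-negative, so one cannot argue with it directly. Recall from the proof of Lemma \ref{lemma:e^s_t_Laplace}, specifically \eqref{eq_Z^s_t_V}, that on $\gamma^{(z,\sqrt{2})}$ one has $\Z^s_\infty = \V^{z,s}_\infty$ where $\V^{z,s}_\infty := \lim_{t\to\infty}\V^{z,s}_t$, while \eqref{Z_equal_V} gives $\Z_\infty = \V^z_\infty$ on the same event, with $\V^z_\infty := \lim_{t\to\infty}\V^z_t$. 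Since $\P(\gamma^{(z,\sqrt{2})})\uparrow 1$ as $z\uparrow\infty$ by \eqref{eq_gamma_to_1}, it suffices to fix $z>x$ and prove that $\lim_{s\to\infty}\V^{z,s}_\infty = \V^z_\infty$ $\P_x$-almost surely; the claim of Lemma \ref{lemma:Z^s_converge} then holds on the full-measure set $\bigcup_z\gamma^{(z,\sqrt{2})}$.

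First I would record two monotonicity facts for fixed $z$. For $s_1<s_2\le t$ the survival constraint defining $\N^s_t$ is more stringent for larger $s$, so $\N^{s_2}_t\subseteq\N^{s_1}_t$; since every summand in \eqref{def_V_tilde^s} is non-negative (the weight $z+\sqrt{2}t-X_u(t)$ is positive on $\widehat N^z_t$), this gives $\V^{z,s_2}_t\le\V^{z,s_1}_t$, and letting $t\to\infty$ yields $\V^{z,s_2}_\infty\le\V^{z,s_1}_\infty$. Likewise $\N_t\subseteq\N^s_t$ gives $\V^z_t\le\V^{z,s}_t$ and hence $\V^z_\infty\le\V^{z,s}_\infty$. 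Thus, along a countable dense set of $s$, the quantity $\V^{z,s}_\infty$ is non-increasing in $s$ and bounded below by $\V^z_\infty$, so the limit $\V^{z,\infty}_\infty:=\lim_{s\to\infty}\V^{z,s}_\infty$ exists $\P_x$-almost surely and satisfies $\V^{z,\infty}_\infty\ge\V^z_\infty$.

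It then remains to upgrade this inequality to an equality, which I would do at the level of expectations. The decisive observation is that $\V^{z,s}_t\le V^z_t$ for all $t\ge s$, because the particles counted by $\V^{z,s}_t$ form a subset of $\widehat N^z_t$ and all weights are non-negative; since $\{V^z_t\}$ is a uniformly integrable martingale by \cite[Theorem 13]{Kyprianou04}, domination by a uniformly integrable family makes $\{\V^{z,s}_t\}_{t\ge s}$ uniformly integrable. As $\{\V^{z,s}_t\}_{t\ge s}$ is a martingale (Lemma \ref{lemma:e^s_t_Laplace}), this yields $\E_x\V^{z,s}_\infty = \E_x\V^z_s$. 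The same domination shows $\{\V^z_s\}_s$ is uniformly integrable, so $\E_x\V^z_s\to\E_x\V^z_\infty$ as $s\to\infty$, and therefore $\E_x\V^{z,s}_\infty\to\E_x\V^z_\infty$. Because $\V^{z,s}_\infty\downarrow\V^{z,\infty}_\infty$ is dominated by the integrable $\V^{z,s_0}_\infty$, dominated convergence gives $\E_x\V^{z,\infty}_\infty=\lim_{s\to\infty}\E_x\V^{z,s}_\infty=\E_x\V^z_\infty$. Since $\V^{z,\infty}_\infty\ge\V^z_\infty$ $\P_x$-almost surely and the two have equal finite expectation, they coincide $\P_x$-almost surely, which is exactly what was needed.

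The step I expect to be the main obstacle is the expectation identity $\E_x\V^{z,s}_\infty=\E_x\V^z_s$, i.e. the uniform integrability in $t$ of the family $\{\V^{z,s}_t\}_{t\ge s}$ together with the interchange of the $t\to\infty$ and $s\to\infty$ limits; everything hinges on the pointwise domination $\V^{z,s}_t\le V^z_t$ by Kyprianou's closed martingale, and once that bound is in hand the monotonicity-plus-equal-expectation squeeze is routine.
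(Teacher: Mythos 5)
Your proof is correct, and it reaches the key estimate by a genuinely different route than the paper. Both arguments share the same skeleton: reduce, via \eqref{Z_equal_V}, \eqref{eq_Z^s_t_V} and \eqref{eq_gamma_to_1}, to showing $\lim_{s\to\infty}\V_{\infty}^{z,s}=\V_{\infty}^{z}$ for fixed $z>x$; use the pathwise inclusion $\N^{s_2}_t\subseteq\N^{s_1}_t$ to get monotonicity in $s$, hence existence of the limit and the inequality $\lim_{s\to\infty}\V_{\infty}^{z,s}\ge\V_{\infty}^{z}$; and finish by showing the non-negative difference has zero expectation. Where you differ is in how that zero expectation is obtained. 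The paper works with the non-negative submartingale $\V^{z,s}_t-\V^z_t$, applies Fatou's lemma, and then computes $\liminf_{t\to\infty}\E_x[\V^{z,s}_t-\V^z_t]$ explicitly via the many-to-one lemma, Girsanov's theorem and the Bessel-3 change of measure, bounding it by $(z-x)e^{\sqrt{2}x}\,\Pi_x^{(\sqrt{2},z)}\bigl(\overline{\sigma}_x^{\sqrt{2}-\rho}\in(s,\infty)\bigr)$, which vanishes as $s\to\infty$. You avoid all path computations: from $\V^{z,s}_t\le V^z_t$, from the fact that Kyprianou's Theorem 13 ($L^1$ convergence of the martingale $V^z$) is equivalent to its uniform integrability, and from the standard fact that UI passes to dominated non-negative families, you conclude that the martingale $\{\V^{z,s}_t\}_{t\ge s}$ is closed, giving the exact identity $\E_x\V^{z,s}_\infty=\E_x\V^z_s$, while UI of the supermartingale $\{\V^z_s\}$ upgrades its a.s.\ convergence to $L^1$, so $\E_x\V^z_s\to\E_x\V^z_\infty$; the monotone-plus-equal-expectation squeeze then closes the argument. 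Both proofs ultimately lean on the same external input (Kyprianou's $L^1$ convergence), but yours is softer and shorter; what the paper's computation buys in exchange is quantitative information --- an explicit bound on $\E_x[\V^{z,s}_\infty-\V^z_\infty]$ in terms of the Bessel-3 crossing-time tail --- and it recycles machinery (many-to-one plus the measure change \eqref{eq_meas_change_Bessel}) already set up in Lemma \ref{lemma:mart_V} and reused in the lemma that follows, keeping the paper's toolkit uniform. One point you should state explicitly if you write this up: the monotonicity $\V^{z,s_1}_\infty\ge\V^{z,s_2}_\infty$ holds a.s.\ for each fixed pair $s_1<s_2$, so the a.s.\ existence of $\lim_{s\to\infty}\V^{z,s}_\infty$ is first obtained along a countable set and then extended by monotonicity; your parenthetical remark about a countable dense set covers this, and the paper glosses over the same point.
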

	\begin{proof}
		By \eqref{Z_equal_V} and \eqref{eq_Z^s_t_V}, we know that on $\gamma^{(z,\sqrt{2})}$, $\Z_{\infty}^s = \V_{\infty}^{z,s}$ and $\Z_{\infty} = \V_{\infty}^z$ almost surely. Therefore, it suffices to show that $\lim_{s\rightarrow\infty} \V_{\infty}^{z,s} = \V_{\infty}^z$. By Lemma \ref{lemma:mart_V} and the proof of Lemma \ref{lemma:e^s_t_Laplace}, $\{\V_{t}^{z}, t\geq 0, \P_x\}$ is a supermartingale and $\{\V_{t}^{z,s}, t\geq s, \P_x\}$ is a non-negative martingale. Combining this with $\V_{t}^{z,s}\geq \V_t^z$, we obtain that $\{\V_{t}^{z,s} - \V_{t}^{z}, t\geq s, \P_x\}$ is a non-negative submartingale and converges to $\V_{\infty}^{z,s} - \V_{\infty}^z$. For any $u\in N_t$, define 
		\begin{equation}\label{def_tau_u}
			\underline{\tau}(u) := \underline{\tau}_{0}^{\rho}(u) = \inf\{s\geq 0: X_u(s) \leq \rho s \}.
		\end{equation} 
		By Fatou's lemma and \eqref{classical_many-to-one}, we have
		\begin{align}
			\E_x[\V_{\infty}^{z,s} - \V_{\infty}^z] &\leq \liminf_{t\rightarrow\infty} \E_x\left[\V_{t}^{z,s} - \V_{t}^{z}\right]\\ 
			&= \liminf_{t\rightarrow\infty} \E_x\left[\sum_{u\in \widehat{N}_t^z} (z+\sqrt{2}t-X_u(t))e^{\sqrt{2}(X_u(t)-\sqrt{2}t)} \1_{\{\underline{\tau}(u)\in(s,t) \}} \right]\\
			&= \liminf_{t\rightarrow\infty} e^t \Pi_x \left[(z+\sqrt{2}t-B_t) e^{\sqrt{2}(B_t-\sqrt{2}t)} \1_{\{\overline{\tau}_z^{\sqrt{2}}>t, \, \underline{\tau}_{0}^{\rho}\in (s,t)\}} \right].
		\end{align}
		Using the similar argument and notation in the proof of Lemma \ref{lemma:mart_V}, we get that
		\begin{align} 
			\E_x[\V_{\infty}^{z,s} - \V_{\infty}^z] &\leq \liminf_{t\rightarrow\infty} e^{\sqrt{2}x} \Pi_x^{\sqrt{2}} \left[ (z-x+\widehat{B}_t) \1_{\{ \underline{\sigma}_{x-z}^{0} > t, \, \overline{\sigma}_x^{\sqrt{2}-\rho}\in (s,t) \}} \right]\\
			&= \liminf_{t\rightarrow\infty} (z-x)e^{\sqrt{2}x}\Pi_x^{(\sqrt{2},z)}\left[\1_{\{\overline{\sigma}_x^{\sqrt{2}-\rho}\in (s,t) \}}\right]\\
			&= (z-x)e^{\sqrt{2}x}\Pi_x^{(\sqrt{2},z)}\left[\1_{\{\overline{\sigma}_x^{\sqrt{2}-\rho}\in (s,\infty) \}}\right],
		\end{align}
		where $\{z-x+\widehat{B}_t, \Pi_x^{(\sqrt{2},z)} \}$ is a standard Bessel-3 process starting from $z-x$ and
		$\overline{\sigma}_x^{\sqrt{2}-\rho} = \inf \{s\geq 0: z-x+\widehat{B}_s \geq z + (\sqrt{2}-\rho)s \}$.
		Therefore, it follows from the bounded convergence theorem that 
		\begin{equation}
			\lim_{s\rightarrow\infty} \E_x[\V_{\infty}^{z,s} - \V_{\infty}^z] \leq	\lim_{s\rightarrow\infty} (z-x)e^{\sqrt{2}x}  \Pi_x^{(\sqrt{2},z)} \left[\1_{\{\overline{\sigma}_x^{\sqrt{2}-\rho}\in (s,\infty) \}}\right] = 0.
		\end{equation}
		Since for any $s_1\leq s_2\leq t$, $\V_{t}^{z,s_1} \geq \V_{t}^{z,s_2}$ $\P_x$-almost surely, it holds that $\V_{\infty}^{z,s_1} \geq \V_{\infty}^{z,s_2}$ $\P_x$-almost surely. So the limit $\lim_{s\rightarrow\infty}\V_{\infty}^{z,s}$ exists $\P_x$-almost surely. By the monotone convergence theorem, we get that
		\begin{equation}
			\E_x[\lim_{s\rightarrow\infty}\V_{\infty}^{z,s} - \V_{\infty}^z] = \lim_{s\rightarrow\infty} \E_x[\V_{\infty}^{z,s} - \V_{\infty}^z] = 0.
		\end{equation}
		Since $\lim\limits_{s\rightarrow\infty}\V_{\infty}^{z,s} - \V_{\infty}^z\geq 0$, we have $\lim\limits_{s\rightarrow\infty}\V_{\infty}^{z,s} = \V_{\infty}^z$ $\P_x$-almost surely. Hence we have 
		$\lim\limits_{s\rightarrow\infty}\Z_{\infty}^{s} = \Z_{\infty}$ on $\gamma^{(z,\sqrt{2})}$. 
		By \eqref{eq_gamma_to_1}, we get the desired result. 
	\end{proof}
	
	In the following lemma, we prove that if a particle hits the line $\{(y,s): y=\rho s\}$ at a large time, then the probability of it having descendants above level $m_t$ at time $t$ approaches zero.
	\begin{lemma}
		For any $A>0$, it holds that
		\begin{equation}\label{eq_tau_geq_s}
			\lim_{s\rightarrow\infty} \limsup_{t\rightarrow\infty} \P_x(\exists u\in N_t: \underline{\tau}(u)\geq s, X_u(t) \geq m_t -A ) = 0.
		\end{equation}
	\end{lemma}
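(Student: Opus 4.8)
The plan is to reduce the existence probability to a first moment taken along the stopping line generated by the first meeting of the absorbing barrier, and then to control the contribution of each such particle by the \emph{sharp} upper tail of the maximum in Lemma~\ref{lemma:M_tail}. Set
\[ p_{s,t}:=\P_x\big(\exists u\in N_t:\ \underline{\tau}(u)\in[s,t],\ X_u(t)\geq m_t-A\big). \]
Since any $u\in N_t$ that meets the barrier does so at a finite time $\underline{\tau}(u)\leq t$, the particles relevant to \eqref{eq_tau_geq_s} have $\underline{\tau}(u)\in[s,t]$, and I bound $p_{s,t}$ (surviving particles, which never touch the barrier, cancel when $\e_t^s$ is compared with $\e_t$ and need not be counted here). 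Take $D=\{(r,y):y>\rho r\}$, so that $\tau_D(u)=\underline{\tau}(u)$ and the stopping line $L_D$, with particle set $N_D$, records each ancestral line at the instant it first touches the barrier. Every $u$ counted in $p_{s,t}$ is a descendant of a unique $w\in N_D$ with $\tau_w:=\underline{\tau}(w)\in[s,t]$ and $X_w(\tau_w)=\rho\tau_w$; writing $G_w$ for the event that the subtree rooted at $w$ has a particle above $m_t-A$ at time $t$, a union bound over the stopping line gives $p_{s,t}\leq\E_x[\sum_{w\in N_D}\1_{\{\tau_w\in[s,t]\}}\1_{G_w}]$. Conditioning on the stopping-line $\sigma$-field $\F_{L_D}$ and invoking the branching property, the subtrees are independent branching Brownian motions issued from $(\tau_w,\rho\tau_w)$, so $\P_x(G_w\mid\F_{L_D})=\P(M_{t-\tau_w}\geq m_t-A-\rho\tau_w)$ by translation invariance. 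Applying the stopping-line many-to-one formula (Lemma~\ref{lemma:many-to-one}) with $F=\1_{\{\underline{\tau}\in[s,t]\}}\,\P(M_{t-\underline{\tau}}\geq m_t-A-\rho\underline{\tau})$, which is $\F_{\underline{\tau}}^{B}$-measurable, I would obtain
\[ p_{s,t}\leq\Pi_x\big[e^{\underline{\tau}}\,\1_{\{\underline{\tau}\in[s,t]\}}\,\P(M_{t-\underline{\tau}}\geq m_t-A-\rho\underline{\tau})\big], \]
where $\underline{\tau}=\underline{\tau}_0^{\rho}$ is the barrier hitting time of the Brownian motion $B$ under $\Pi_x$ (cf.\ \eqref{def_tau_rho}).

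The decisive point is to estimate the inner maximum probability by Lemma~\ref{lemma:M_tail} rather than by a crude particle count. Writing $m_t-A-\rho\tau=m_{t-\tau}+y$ with
\[ y=y(\tau,t)=(\sqrt{2}-\rho)\tau-A-\tfrac{3}{2\sqrt{2}}\log\tfrac{t}{t-\tau}, \]
Lemma~\ref{lemma:M_tail} gives $\P(M_{t-\tau}\geq m_{t-\tau}+y)\leq b\,y\,e^{-\sqrt{2}y}$ up to the benign corrections $e^{-y^2/(2(t-\tau))+\cdots}$, valid when $y>1$ and $t-\tau\geq t_0$. (Bounding $G_w$ by the first moment $e^{t-\tau}\Pi(B_{t-\tau}\geq m_{t-\tau}+y)$ instead would leave a spurious factor of order $t$ — the usual entropic overcount that the ballot correction in Lemma~\ref{lemma:M_tail} removes — so the sharp tail is what makes the estimate work.) Combining the many-to-one weight $e^{\underline{\tau}}$, the hitting density $f(\tau)=\frac{x}{\sqrt{2\pi\tau^{3}}}e^{-(x-\rho\tau)^{2}/(2\tau)}\sim c_x\,\tau^{-3/2}e^{-\rho^{2}\tau/2}$ of $\underline{\tau}$ under $\Pi_x$, and the factor $b\,y\,e^{-\sqrt2 y}\sim b(\sqrt2-\rho)\tau\,e^{-(2-\sqrt2\rho)\tau}e^{\sqrt2 A}$, the exponential rate in $\tau$ collapses to $1-(2-\sqrt2\rho)-\tfrac{\rho^{2}}{2}=-\tfrac12(\sqrt2-\rho)^2<0$, using $\rho<\sqrt2$.

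Assembling these factors over the main range $s\leq\tau\leq t$ reduces the bound to an integral of the shape $C(x,A,\rho)\int_s^{\infty}\tau^{-1/2}e^{-\frac12(\sqrt2-\rho)^2\tau}\,\d\tau$, which is finite, uniform in $t$, and tends to $0$ as $s\to\infty$. Taking $\limsup_{t\to\infty}$ and then $\lim_{s\to\infty}$ would then yield \eqref{eq_tau_geq_s}.

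The main obstacle is the range of $\tau$ in which Lemma~\ref{lemma:M_tail} does not apply. When $t-\tau$ stays bounded, a particle starting at height $\rho\tau$ has too little time to climb the distance $\approx(\sqrt2-\rho)t$ up to $m_t-A$; there I would fall back on the crude first moment $\P(M_{t-\tau}\geq m_t-A-\rho\tau)\leq e^{t-\tau}\Pi(B_{t-\tau}\geq m_t-A-\rho\tau)$, whose Gaussian factor is super-exponentially small in $t$ and dominates $e^{\underline{\tau}}\leq e^{t}$, so this part vanishes as $t\to\infty$. When $\tau$ is close to $s$, the requirement $y>1$ is secured once $s$ is large enough that $(\sqrt2-\rho)s-A>1$, which costs nothing since $s\to\infty$. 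Finally, for $\rho\leq0$ the barrier is reached only with probability $<1$, but the same explicit expression for $f$ (now a sub-probability density) still carries the factor $e^{-\rho^{2}\tau/2}$, so the rate $-\tfrac12(\sqrt2-\rho)^2$ and the whole argument persist unchanged.
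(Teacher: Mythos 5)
Your proposal is correct and follows essentially the same route as the paper's proof: reduce to a first moment over the stopping line at the barrier via the strong Markov branching property and the stopping-line many-to-one formula (Lemma \ref{lemma:many-to-one}), apply the sharp tail bound of Lemma \ref{lemma:M_tail} on the bulk of the hitting-time range, handle the remaining range by the crude Chernoff bound, and observe that the net exponential rate $1-(2-\sqrt{2}\rho)-\rho^2/2=-\frac{1}{2}(\sqrt{2}-\rho)^2<0$ makes the resulting integral vanish as $s\to\infty$. Your reading of the event as $\underline{\tau}(u)\in[s,t]$ (finite hitting times only) is also the correct interpretation, and it matches how the paper uses the lemma.

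One technical point is handled less carefully than in the paper and needs a small repair. You place the cut where Lemma \ref{lemma:M_tail} stops applying, i.e.\ at $t-\tau\asymp t_0$, whereas the paper cuts at $\tau=pt$ for a fixed $p\in(0,1)$. With your cut, the factor $e^{-\sqrt{2}y}$ contains $e^{\frac{3}{2}\log\frac{t}{t-\tau}}=\left(\frac{t}{t-\tau}\right)^{3/2}$, which on $\tau\in[t/2,\,t-t_0]$ can be as large as $(t/t_0)^{3/2}$; this factor is silently dropped in your asymptotics $b\,y\,e^{-\sqrt{2}y}\sim b(\sqrt{2}-\rho)\tau\,e^{-(2-\sqrt{2}\rho)\tau}e^{\sqrt{2}A}$, so your claimed bound $C\int_s^{\infty}\tau^{-1/2}e^{-\frac{1}{2}(\sqrt{2}-\rho)^2\tau}\,\d\tau$ with $C$ uniform in $t$ is not literally correct. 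The fix is easy: for $\tau\le t/2$ the factor is at most $2^{3/2}$, while for $\tau\ge t/2$ the polynomial $(t/t_0)^{3/2}$ is swallowed by $e^{-\frac{1}{2}(\sqrt{2}-\rho)^2 t/2}$, so that portion vanishes as $t\to\infty$. The paper's fixed-fraction cut is precisely what keeps $\frac{t}{t-\tau}\le(1-p)^{-1}$ bounded, which also makes the verification that $y>1$ and that the correction $e^{-\frac{y^2}{2(t-\tau)}+\frac{3}{2\sqrt{2}}y\frac{\log(t-\tau)}{t-\tau}}$ is harmless uniform over the whole range; in exchange, the paper must treat the larger leftover range $\tau\in(pt,t]$ with the crude bound, which costs only an elementary computation.
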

	\begin{proof}
		Let $p\in (0,1)$ and
		\begin{align} 
			I &:= \P_x(\exists u\in N_t: \underline{\tau}(u)> pt, X_u(t) \geq m_t -A ),\\ 
			II &:= \P_x(\exists u\in N_t: \underline{\tau}(u)\in [s,pt], X_u(t) \geq m_t -A ),
		\end{align}
		then $\P_x(\exists u\in N_t: \underline{\tau}(u)\geq s, X_u(t) \geq m_t -A ) = I+II$. Notice that
		\begin{equation}
			I \leq \E_x \left[\sum_{u\in N_t} \textbf{1}_{\{ \underline{\tau}(u)>pt, X_u(t)\geq m_t-A \}}\right].
		\end{equation}
		According to Lemma \ref{many-to-one} and the strong Markov property, we get that
		\begin{align}
			I &\leq \Pi_x[e^t\1_{\{ \underline{\tau}_{0}^{\rho}>pt, \, B_t\geq m_t-A \}}]\\
			&=\Pi_x\left[e^t 1_{\{\underline{\tau}_{0}^{\rho}>pt\} }\Pi_x[\1_{\{ B_t\geq m_t-A \}}|\mathcal{F}_{\underline{\tau}_{0}^{\rho}}] \right]\\
			&=\Pi_x\left[e^t 1_{\{\underline{\tau}_{0}^{\rho}>pt\} }\Pi_{B_{\underline{\tau}_{0}^{\rho}}}[B_{t-\underline{\tau}_{0}^{\rho}}\geq m_t-A] \right],
		\end{align}
		where $\underline{\tau}_{0}^{\rho}$ is defined by \eqref{def_tau_rho}. 
		Since $B_{\underline{\tau}_{0}^{\rho}} = \rho \underline{\tau}_{0}^{\rho}$, we have the following estimate
		\begin{align}
			I &\leq e^t \int_{pt}^t \Pi_x(\underline{\tau}_{0}^{\rho}\in\d r) \Pi_{\rho r}(B_{t-r}\geq m_t-A)\\
			&= e^t\int_{pt}^t  \Pi_x(\underline{\tau}_{0}^{\rho}\in\d r)  \Pi\Big(B_{t-r}\geq \sqrt{2}(t-r) + (\sqrt{2}-\rho)r -\frac{3}{2\sqrt{2}}\log t - A\Big).
		\end{align}
		For any $y\in\R$, an application of Markov's inequality shows that 
		\begin{equation}
			e^{t-r} \Pi(B_{t-r} \geq \sqrt{2}(t-r) + y) \leq e^{-\sqrt{2}y}.
		\end{equation}
		It follows from \cite[Section 3.5.C]{Karatzas} that
		\begin{equation}
			\Pi_x(\underline{\tau}_{0}^{\rho}\in \d r) = \frac{x}{\sqrt{2\pi r^3}} \exp\left\{ - \frac{(x-\rho r)^2}{2r} \right\}
		\end{equation} 
		Therefore,
		\begin{align}
			I &\leq \int_{pt}^t e^r \frac{x}{\sqrt{2\pi r^3}} e^{ - \frac{(x-\rho r)^2}{2r} } e^{-(2-\sqrt{2}\rho)r + \frac{3}{2} \log t + \sqrt{2}A}\d r\\
			&= \int_{pt}^t \frac{x}{\sqrt{2\pi }} \frac{t^\frac{3}{2}}{r^\frac{3}{2}} e^{-(\frac{\rho}{\sqrt{2}}-1)^2r - \frac{x^2}{2r} + x\rho + \sqrt{2}A}\d r\\
			&\leq C \int_{pt}^{t} \frac{1}{p^{\frac{3}{2}}} e^{-(\frac{\rho}{\sqrt{2}}-1)^2r} \d r\\
			&\leq \frac{C}{p^{\frac{3}{2}}} \int_{pt}^{\infty} e^{-(\frac{\rho}{\sqrt{2}}-1)^2r} \d r \overset{t\rightarrow \infty}{\longrightarrow} 0,
		\end{align}
		where $C$ is a positive constant only depending on $x,\rho,A$ and we used $\rho<\sqrt{2}$ here.

		For any $a,b\in \R$, define the space-time domain
		\begin{equation}
			\underline{D}_a^b:=\{(t,x):t> 0, x > a+bt\}.
		\end{equation}
		Recall that the definition of stopping line \eqref{def_stopping_line}. Now we give an estimate of $II$. 
		\begin{equation}
			II \leq \E_x \bigg{[}\sum_{v\in N_{\underline{D}_0^{\rho}}} \1_{\{\underline{\tau}(v)\in [s,pt],\, \exists u>v, u\in N_t, X_u(t) \geq m_t-A \}}\bigg{]}.
		\end{equation}
		By the strong Markov branching property (see Jagers \cite[Theorem 4.14]{Jagers89}, also see Dynkin \cite[Theorem 1.5]{Dynkin91} for the corresponding property for superprocesses, where this property is called the special Markov property),
		\begin{align}
			II &\leq \E_x \Bigg{[}\E_x\bigg{(} \sum_{v\in N_{\underline{D}_{0}^{\rho}}} \1_{\{\underline{\tau}(v)\in [s,pt],\, \exists u>v, u\in N_t, X_u(t) \geq m_t-A \}} | \F_{\underline{\tau}_{0}^{\rho}}\bigg{)}\Bigg{]}\\
			&= \E_x \Bigg{[}\sum_{v\in N_{\underline{D}_{0}^{\rho}}} \1_{\{\underline{\tau}(v)\in [s,pt]\}} \P_{X_v(\underline{\tau}(v))}(M_{t-\underline{\tau}(v)}(v) \geq m_t-A)\Bigg{]},
		\end{align}
		where $\F_{\underline{\tau}_{0}^{\rho}}$ be the natural filtration
		generated by the spatial paths and the number of offspring of the individuals before hitting the stopping line $L_{\underline{D}_{0}^{\rho}}$ and $M_{t-\underline{\tau}(v)}(v) = \max_{u>v, u\in N_t} X_u(t)$. By Lemma \ref{lemma:many-to-one}, we get that
		\begin{align}
			II &\leq \int_s^{pt} e^r \Pi_x(\underline{\tau}_{0}^{\rho}\in \d r) \P_{\rho r}(M_{t-r}(v)\geq m_t-A)\\
			&= \int_s^{pt} e^r \Pi_x(\underline{\tau}_{0}^{\rho}\in \d r) \P(M_{t-r}(v)\geq m_{t-r} + \sqrt{2}r - \frac{3}{2\sqrt{2}}\log \frac{t}{t-r} - A -\rho r).
		\end{align}
		Let $y = (\sqrt{2}-\rho)r - \frac{3}{2\sqrt{2}}\log \frac{t}{t-r} - A$. Since $\sqrt{2}-\rho>0$ and $t/(t-r) < 1/(1-p)$, for $s$ large enough, it holds that $y>1$ for any $r\in[s,pt]$. By Lemma \ref{lemma:M_tail}, we have
		\begin{align}
			II &\leq \int_s^{pt} e^r \frac{x}{\sqrt{2\pi r^3}} e^{ - \frac{(x-\rho r)^2}{2r} } by e^{-\sqrt{2}y-\frac{y^2}{2(t-r)} + \frac{3}{2\sqrt{2}}y\frac{\log (t-r)}{t-r}} \d r\\
			&\leq \int_s^{pt} e^r \frac{x}{\sqrt{2\pi r^3}} e^{ - \frac{(x-\rho r)^2}{2r} } b y e^{-\sqrt{2}y+1} \d r,
		\end{align}
		where the last inequality follows from that  
		$$-\frac{y^2}{2(t-r)} + \frac{3}{2\sqrt{2}}y\frac{\log (t-r)}{t-r} \leq 1$$
		for all $t-r$ large enough. Therefore,
		\begin{align}
			II &\leq C \int_s^{pt} e^r \frac{x}{\sqrt{2\pi r^3}} e^{ - \frac{(x-\rho r)^2}{2r} } r e^{-(2-\sqrt{2}\rho)r + \frac{3}{2}\log \frac{t}{t-r}} \d r\\
			&\leq C \int_s^{pt}\frac{rx}{\sqrt{2\pi r^3}} e^{ - (1-\sqrt{2}\rho+\frac{\rho^2}{2})r - \frac{x^2}{2r} + \frac{3}{2}\log \frac{t}{t-r}} \d r\\
			&\leq C \int_s^{pt} \frac{rx}{\sqrt{2\pi r^3}} (\frac{t}{(1-p)t})^{\frac{3}{2}} e^{-\frac{(x-(\sqrt{2}-\rho)r)^2}{2r}} \d r\\
			&\leq C \Pi_x[\underline{\tau}_{0}^{\sqrt{2}-\rho} \1_{\{\underline{\tau}_{0}^{\sqrt{2}-\rho} \geq s \}} ],
		\end{align}
		where the positive constant $C$ is changed line by line and only depends on $x,\rho,A$. By \cite[Exercise 3.5.10]{Karatzas}, a direct calculation shows that $\Pi_x[\underline{\tau}_{0}^{\sqrt{2}-\rho}] = \frac{x}{\sqrt{2}-\rho}$. Therefore, by the dominated convergence theorem, 
		\begin{equation}
			\lim_{s\rightarrow\infty} \Pi_x[\underline{\tau}_{0}^{\sqrt{2}-\rho} \1_{\{\underline{\tau}_{0}^{\sqrt{2}-\rho} \geq s \}} ] = 0.
		\end{equation}
		Thus,
		\begin{equation}
			\lim_{s\rightarrow\infty} \limsup_{t\rightarrow\infty} \P_x(\exists u\in N_t: \underline{\tau}(u)\in [s,pt], X_u(t) \geq m_t -A ) = 0.
		\end{equation}
		This completes the proof.
	\end{proof}
	
	Based on the above lemmas, now we  present the proof of Theorem \ref{thrm2}.
	\begin{proof}[Proof of Theorem \ref{thrm2}]
		It is equivalent to prove Theorem \ref{thrm2'}.
		Let $A$ be chosen such that $\mathrm{supp}(\varphi) \subset [-A,\infty)$. Note that only on the event $\{\exists u\in N_t: \underline{\tau}(u)\geq s, X_u(t) \geq m_t -A \}$, the point processes $\e^s_t$ and $\e_t$ are not equal. Therefore,
		\begin{equation}\label{eq_e^s_bound}
			|\E_x (e^{-\langle \e^s_t, \varphi \rangle}) - \E_x (e^{-\langle \e_t, \varphi \rangle})| \leq \P_x(\exists u\in N_t: \underline{\tau}(u)\geq s, X_u(t) \geq m_t -A).
		\end{equation}
		By Lemma \ref{lemma:Z^s_converge} and the bounded convergence theorem, it holds that
		\begin{equation}\label{eq_Z^s_Laplace_converge}
			\lim_{s\rightarrow\infty} \E_x(e^{-C(\varphi) \Z^s_{\infty}}) = \E_x(e^{-C(\varphi) \Z_{\infty}}),
		\end{equation}
		where $C(\varphi)$ is given by \eqref{def_Cvarphi}.
		By \eqref{eq_e^s_t_Laplace}, \eqref{eq_tau_geq_s}, \eqref{eq_e^s_bound} and \eqref{eq_Z^s_Laplace_converge}, we get that
		\begin{align}
			\limsup_{t\rightarrow\infty} &|\E_x (e^{-\langle \e_t, \varphi \rangle}) - \E_x(e^{-C(\varphi) \Z_{\infty}})| 
			\leq \lim_{s\rightarrow\infty}\limsup_{t\rightarrow\infty} |\E_x (e^{-\langle \e^s_t, \varphi \rangle}) -\E_x (e^{-\langle \e_t, \varphi \rangle})|\\ 
			&+ \limsup_{t\rightarrow\infty} |\E_x (e^{-\langle \e^s_t, \varphi \rangle}) - \E_x(e^{-C(\varphi) \Z^s_{\infty}})| + \lim_{s\rightarrow\infty} |\E_x(e^{-C(\varphi) \Z^s_{\infty}}) - \E_x(e^{-C(\varphi) \Z_{\infty}})|\\
			=& \,0.
		\end{align}
		By the definition of $C(\varphi)$ and Campbell formula for Poisson random measure, a direct calculation shows that the Laplace functionals of DPPP$\left(\sqrt{2}C_*\Z_{\infty}e^{-\sqrt{2}x}\d x, \D^{\sqrt{2}}\right)$ is $\E_x(e^{-C(\varphi) \Z_{\infty}})$. This completes the proof.
	\end{proof}
	
	At the end of the paper, we provide a proof of Corollary \ref{corollary3} by combining the convergence of the extremal process and the property of $\D^{\sqrt{2}}$.
	\begin{proof}[Proof of Corollary \ref{corollary3}]
		Define 
		\begin{equation}
			\e^{-\rho}:=\mathrm{DPPP}\left(\sqrt{2}C_*\Z_{\infty}^{-\rho}e^{-\sqrt{2}z}\d z, \D^{\sqrt{2}}\right),
		\end{equation}
		and let $\mathcal{Q}^{-\rho}$ be the Poisson point process with intensity $\sqrt{2}C_*\Z_{\infty}^{-\rho}e^{-\sqrt{2}z}\d z$.
		According to Theorem \ref{thrm2} and Lemma \ref{lemma:weak_convergence}
		\begin{equation}
			\lim_{t\to\infty}\max\e_t^{-\rho}=\max\e^{-\rho}\ \mbox{ in law }.
		\end{equation}
		Since decoration $\D^{\sqrt{2}}$ has no effect on the maximum of the limit of extremal process, it follows that 
		\begin{align}
			\lim_{t\to\infty}\mathbf{P}_x\left(\widetilde{M}_t^{-\rho}-m_t^{-\rho}\le z\right)
			&=\mathbf{P}_x\left(\e^{-\rho}(z,\infty)=0\right)\\
			&=\mathbf{P}_x\left(\mathcal{Q}^{-\rho}(z,\infty)=0\right)\\
			&=\mathbf{E}_x\left[\mathbf{E}_x\left[e^{-\int_{z}^{\infty}\sqrt{2}C_*\Z_{\infty}^{-\rho}e^{-\sqrt{2}y}\d y }|\Z_{\infty}^{-\rho}\right]\right]\\				&=\mathbf{E}_x\left[e^{-C_*e^{-\sqrt{2}z} \Z_{\infty}^{-\rho} }\right].
		\end{align}
		The proof is completed.
	\end{proof}
	
	\vspace{.1in}
	\textbf{Acknowledgment}:
	We would like to express deep gratitude to Professor Yanxia Ren and Professor Renming Song for their constructive suggestions.
	\vspace{.1in}


\begin{thebibliography}{99}
		\bibitem{ABBS13} E. A\"{i}d\'{e}kon, J. Berestycki, \'{E}. Brunet and Z. Shi. Branching Brownian motion seen from its tip. \emph{Probab. Theory Related Fields} {\bf 157} (2013) 405-451.	
		
		\bibitem{ABK12} L.-P. Arguin, A. Bovier and N. Kistler. Poissonian statistics in the extremal process of branching
		Brownian motion. \emph{Ann. Appl. Probab.} {\bf 22} (2012) 1693-1711.
		
		\bibitem{ABK13} L.-P. Arguin, A. Bovier and N. Kistler. The extremal process of branching Brownian motion. \emph{Probab. Theory Related Fields} {\bf 157} (2013) 535-574.
		
		\bibitem{Belloum21} M. A. Belloum and B. Mallein. Anomalous spreading in reducible multitype branching Brownian motion. \emph{Electron. J. Probab.} {\bf 26} (2021)  no. 61, 39 pp.
		
		\bibitem{Berestycki11} J. Berestycki, N. Berestycki and J. Schweinsberg. 
		Survival of near-critical branching Brownian motion. \emph{J. Stat. Phys.} \textbf{143} (2011), 833-854.
		
		\bibitem{Berestycki14} J. Berestycki, N. Berestycki and J. Schweinsberg. 
		Critical branching Brownian motion with absorption: survival probability. \emph{Probab. Theory Related Fields} \textbf{160} (2014) 489-520.
		
		\bibitem{Berestycki22} J. Berestycki, \'{E}. Brunet, A. Cortines, and B. Mallein, A simple backward construction of
		Branching Brownian motion with large displacement and applications. \emph{Ann. Inst. Henri Poincar\'{e} Probab. Stat.}	{\bf58} (2022) 2094-2113.
		
		\bibitem{Bramson78} M. Bramson. Maximal displacement of branching Brownian motion. \emph{Comm. Pure Appl. Math.} {\bf31} (1978) 531-581.
		
		\bibitem{Bramson83} M. Bramson. Convergence of solutions to the Kolmogorov nolinear diffusion equation to travelling waves. \emph{Mem. Amer. Math. Soc.} {\bf44} (1983) 1-190.
		
		\bibitem{Chauvin91} B. Chauvin. Product martingale and stopping lines for branching Brownian motion. \emph{Ann. Probab.} {\bf 19} (1991) 1195-1205.			
		
		\bibitem{Dynkin91} E. B. Dynkin. Path processes and historical superprocesses. \emph{Probab. Theory Related Fields} {\bf 90} (1991), 1-36.
		
		\bibitem{Fisher37}  R. A. Fisher. The wave of advance of advantageous genes. \emph{Ann. Eugenics} {\bf7} (1937) 355-369.
		
		\bibitem{Harris99} S. C. Harris. Travelling waves for the F-K-P-P equation via probabilistic arguments. \emph{Proc. Roy. Soc. Edinburgh Sect. A} {\bf129} (1999) 503-517.	
		
		\bibitem{Harris07} J. W. Harris and S. C. Harris.
		Survival probabilities for branching Brownian motion with absorption. \emph{Electron. Comm. Probab.} \textbf{12} (2007) 81-92.
		
		\bibitem{Harris06} J. W. Harris, S. C. Harris and A. E. Kyprianou. Further probabilistic analysis of
		the Fisher-Kolmogorov-Petrovskii-Piscounov equation:
		one sided travelling-waves. \emph{Ann. Inst. Henri Poincar\'{e} Probab. Stat.}	{\bf 42} (2006) 125-145.
		
		\bibitem{Harris17} S. C. Harris and M. I. Roberts. The many-to-few lemma and multiple spines. \emph{Ann. Inst. Henri Poincar\'{e} Probab. Stat.} {\bf 53} (2017) 226-242.
		
		\bibitem{Hou23} H. Hou, Y.-X. Ren, and R. Song. Extremal process for irreducible multitype branching Brownian motion. 2023. arXiv:2303.12256.
		
		\bibitem{Imhof84} J.-P. Imhof. Density factorizations for Brownian motion, meander and the three-dimensional Bessel process, and applications. \emph{J. Appl. Probab.} {\bf 21} (1984) 500-510.
		
		\bibitem{Jagers89} P. Jagers. General branching processes as Markov fields. \emph{Stochastic Process. Appl.} {\bf 32} (1989) 183-212.
		
		\bibitem{Karatzas} I. Karatzas and S. E. Shreve. \emph{Brownian Motion and Stochastic Calculus.} Springer, New York, second edition, 1991.
		
		\bibitem{Kesten78} H. Kesten. Branching Brownian motion with absorption. \emph{Stochastic Process. Appl.} {\bf 7} (1978) 9-47.
		
		\bibitem{Kim23}
		Y. H. Kim, E. Lubetzky and O. Zeitouni.
		The maximum of branching Brownian motion in $R^d$. \emph{Ann. Appl. Probab. } \textbf{33} (2023) 1315-1368.
		
		\bibitem{Kolmogorov37} A. Kolmogorov, I. Petrovskii and N. Piskounov. \'{E}tude de I'\'{e}quation de la diffusion avec croissance de la quantit\'{e} de la mati\`{e}re at son application a un probl\`{e}m biologique. \emph{Moscow Univ.Math. Bull.} {\bf1} (1937) 1-25.	
		
		\bibitem{Kyprianou04} A. E. Kyprianou. Travelling wave solution to the K-P-P equation: Alternatives to Simon Harris' probabilistic analysis. \emph{Ann. Inst. Henri Poincar\'{e} Probab. Stat.}	{\bf40} (2004) 53-72.
		
		\bibitem{Lalley87} S. Lalley and T. Sellke. A conditional limit theorem for the frontier of a branching Brownian motion. \emph{Ann. Probab.} {\bf 15} (1987) 1052-1061.
		
		\bibitem{Liu21} J. Liu.
		A Yaglom type asymptotic result for subcritical branching Brownian motion with absorption. \emph{Stochastic Process. Appl.} \textbf{141} (2021) 245-273.
		
		\bibitem{Ma23} H. Ma and Y.-X. Ren. Double jump in the maximum of two-type reducible branching Brownian motion. 2023. arXiv:2305.09988.
		
		\bibitem{Maillard12} P. Maillard. Branching Brownian motion with selection. Ph.D. thesis, 2012. Available at arXiv:1210.3500.
		
		\bibitem{Maillard22} P. Maillard and J. Schweinsberg.
		Yaglom-type limit theorems for branching Brownian motion with absorption. \emph{Ann. H. Lebesgue} \textbf{5} (2022) 921-985.
		
		\bibitem{McKean75} H. P. McKean. Application of Brownian motion to the equation of Kolmogorov-Petrovskii-Piskunov. \emph{Comm. Pure Appl. Math.} {\bf28} (1975) 323-331.
		
		\bibitem{Stroock72} D. W. Stroock and S. R. S. Varadhan. On the support of diffusion processes with applications to
		the strong maximum principle. \emph{Proc. Sixth Berkeley Symp. Math. Statist. Prob.} {\bf III} (1972) 333-359. 
		
	\end{thebibliography}
\end{document}